\newcommand{\R}{\mathbb R}
\newcommand{\C}{\mathbb C}
\newcommand{\eps}{\epsilon}
\newcommand{\re}[1]{\mbox{Re} #1} %real part
\newcommand{\im}[1]{\mbox{Im} #1} %imaginary part
\newcommand{\scal}[1]{\left\langle #1 \right\rangle} %scalar product or japanese bracket
\newcommand{\defendproof}{\hfill $\Box$} %end proof notation
\newtheorem{theorem}{Theorem}[section]
\newtheorem{lemma}[theorem]{Lemma} 
\newtheorem{proposition}[theorem]{Proposition}
\theoremstyle{definition}
\newtheorem{definition}[theorem]{Definition}
\newtheorem{remark}[theorem]{Remark}
\title[Stability Instability NLS inverse-square]{On stability and instability of standing waves for the nonlinear Schr\"odinger equation with inverse-square potential} 
\author{Abdelwahab Bensouilah}
\address{Laboratoire Paul Painlev\'e (U.M.R. CNRS 8524), U.F.R. de Math\'ematiques, Universit\'e Lille 1, 59655 Villeneuve d'Ascq Cedex, France}
\email{ai.bensouilah@math.univ-lille1.fr}
\author{Van Duong Dinh}
 \address{Institut de Mathematiques de Toulouse UMR5219, Universit\'e de Toulouse CNRS, 31062 Toulouse Cedex 9, France}
\email{dinhvan.duong@math.univ-toulouse.fr}
\author{Shihui Zhu}
\address{College of Mathematics and Software Science, Sichuan Normal University, Chengdu 610066, China; School of Mathematical Sciences,
University of Electronic Science and Technology of China, Chengdu, Sichuan 611731, China}
\email{shihuizhumath@163.com; shihuizhumath@uestc.edu.cn}
\keywords{Nonlinear Schr\"odinger equation, inverse-square potential, Standing waves, Stability, Instability. }
\begin{document}
	
	\begin{abstract}
		We consider the focusing nonlinear Schr\"odinger equation with inverse square potential
		\[
		i\partial_t u + \Delta u + c|x|^{-2} u = - |u|^\alpha u, \quad u(0) = u_0 \in H^1, \quad (t,x) \in \R^+ \times \R^d,
		\]
		where $d \geq 3$, $c\ne 0$, $c<\lambda(d)=\left(\frac{d-2}{2}\right)^2$ and $0<\alpha\leq \frac{4}{d}$. Using the profile decomposition obtained recently by the first author \cite{Bensouilah}, we show that in the $L^2$-subcritical case, i.e. $0<\alpha<\frac{4}{d}$, the sets of ground state standing waves are orbitally stable. In the $L^2$-critical case, i.e. $\alpha=\frac{4}{d}$, we show that ground state standing waves are strongly unstable by blow-up. 
	\end{abstract}	
	
    \maketitle
    
	\section{Introduction}
	\setcounter{equation}{0}
	
	Consider the focusing nonlinear Schr\"odinger equation with inverse-square potential
	\begin{align}
		\left\{
		\begin{array}{rcl}
			i\partial_t u + \Delta u + c|x|^{-2} u &=& - |u|^\alpha u, \quad (t,x)\in \R^+ \times \R^d, \\
			u(0)&=& u_0 \in H^1,
		\end{array} 
		\right. \label{focusing NLS inverse square}
	\end{align}
	where $d\geq 3$, $u: \R^+ \times \R^d \rightarrow \C$, $u_0:\R^d \rightarrow \C$, $c\ne 0$ satisfies $c<\lambda(d):=\left(\frac{d-2}{2}\right)^2$ and $0<\alpha\leq \frac{4}{d}$. 
    
     The Schr\"odinger equation \eqref{focusing NLS inverse square} appears in a variety of physical settings, such as quantum field equations or black hole solutions of the Einstein's equations \cite{Case, CEFC, KSWW}. The mathematical interest in the nonlinear Schr\"odinger equation with inverse-square potential comes from the fact that the potential is homogeneous of degree $-2$ and thus scales exactly the same as the Laplacian. %This implies in particular that perturbation methods cannot be used to study the effect of this potential. 
     
	Let $P^0_c$ denote the natural action of $-\Delta - c|x|^{-2}$ on $C^\infty_0(\R^d\backslash \{0\})$. When $c \leq \lambda(d)$, the operator $P^0_c$ is a positive semi-definite symmetric operator. Indeed, we have the following identity
	\[
	\scal{\varphi, P^0_c \varphi} = \int |\nabla \varphi (x)|^2 - c|x|^{-2} |\varphi(x)|^2 dx  = \int \left|\nabla \varphi(x) + \rho x|x|^{-2} \varphi(x) \right|^2 dx \geq 0, 
	\]
	for all $\varphi \in C^\infty_0 (\R^d \backslash \{0\})$, where
	\[
	\rho:= \frac{d-2}{2} - \sqrt{\left(\frac{d-2}{2}\right)^2 -c}. 
	\]
	Denote $P_c$ the self-adjoint extension of $P^0_c$. It is known (see \cite{KSWW}) that in the range $\lambda(d)-1<c<\lambda(d)$, the extension is not unique. In this case, we do make a choice among possible extensions such as Friedrichs extension. Note also that the constant $\lambda(d)$ is the sharp constant appearing in Hardy's inequality
	\begin{align}
	\lambda(d) \int |x|^{-2} |u(x)|^2 dx \leq \int |\nabla u(x)|^2 dx, \quad \forall u \in H^1. \label{hardy inequality} 
	\end{align}
	Throughout this paper, we denote the Hardy functional
	\begin{align}
	\|u\|^2_{\dot{H}^1_c}:= \|\sqrt{P_c} u\|^2_{L^2}=\int |\nabla u(x)|^2 - c|x|^{-2} |u(x)|^2 dx, \label{hardy norm}
	\end{align}
	and define the homogeneous Sobolev space $\dot{H}^1_c$ as the completion of $C^\infty_0(\R^d \backslash \{0\})$ under the norm $\|\cdot\|_{\dot{H}^1_c}$. It follows from $(\ref{hardy inequality})$ that for $c<\lambda(d)$, 
	\begin{align}
	\|u\|_{\dot{H}^1_c} \sim \|u\|_{\dot{H}^1}. \label{equivalent gradient norm}
	\end{align}
	This is an assertion of the isomorphism between the homogeneous space $\dot{H}^1_c$ defined in terms of $P_c$ and the usual homogeneous space $\dot{H}^1$. We refer the interested reader to \cite{KMVZZ-sobolev} for the sharp range of parameters for which such an equivalence holds.

	The local well-posedness for $(\ref{focusing NLS inverse square})$ was established in \cite{OSY-energy}.  More precisely, we have the following result.
	\begin{theorem}[Local well-posedness \cite{OSY-energy}] \label{theorem lwp}
		Let $d\geq 3$ and $c\ne 0$ be such that $c<\lambda(d)$. Then for any $u_0 \in H^1$, there exists $T\in (0,+\infty]$ and a maximal solution $u \in C([0,T), H^1)$ of $(\ref{focusing NLS inverse square})$. The maximal time of existence satisfies either $T=+\infty$ or $T<+\infty$ and 
		\[
		\lim_{t\uparrow T} \|\nabla u(t)\|_{L^2} =\infty.
		\]
		Moreover, the solution enjoys the conservation of mass and energy, i.e. 
		\begin{align*}
		M(u(t)) &= \int |u(t,x)|^2 dx = M(u_0), \\
		E(u(t)) &= \frac{1}{2} \int |\nabla u(t,x)|^2 dx -\frac{c}{2} \int |x|^{-2} |u(t,x)|^2 dx -\frac{1}{\alpha+2} \int |u(t,x)|^{\alpha+2} dx, 	\end{align*}
		for any $t\in [0,T)$. Finally, if $0<\alpha<\frac{4}{d}$, then $T=+\infty$, i.e. the solution exists globally in time.
	\end{theorem}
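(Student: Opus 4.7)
The strategy is to recast \eqref{focusing NLS inverse square} as a fixed-point problem via the Duhamel formula
\[
u(t) = e^{-itP_c} u_0 + i\int_0^t e^{-i(t-s)P_c} \bigl(|u|^\alpha u\bigr)(s)\,ds,
\]
where the propagator $e^{-itP_c}$ is defined by the functional calculus for the self-adjoint operator $P_c$ (the Friedrichs extension of $P_c^0$ when $c>\lambda(d)-1$). The first key ingredient is a family of Strichartz estimates for $e^{-itP_c}$ in the range $c<\lambda(d)$; such estimates are now standard (Burq--Planchon--Stalker--Tahvildar-Zadeh and subsequent refinements) and take the usual form on the admissible pairs of the free Schr\"odinger equation. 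The second ingredient is the norm equivalence \eqref{equivalent gradient norm} together with its analogue on $H^1_c\sim H^1$, which lets me run the whole argument with the usual Sobolev norms instead of the operator-defined ones.

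Next, I would pick an admissible Strichartz pair $(q,r)$ with $r$ slightly larger than $2$ and set up a contraction map on a ball of $L^\infty_t H^1_x\cap L^q_t W^{1,r}_x$ on $[0,T]\times\R^d$. The nonlinear term $|u|^\alpha u$ is controlled by combining the Strichartz inequality with Sobolev embedding (this is the standard Cazenave-type estimate, which works as long as $\alpha\le\frac{4}{d-2}$ in $d\ge3$, hence certainly for $\alpha\le 4/d$). Choosing $T$ small enough depending only on $\|u_0\|_{H^1}$ makes the map a contraction, producing a unique solution $u\in C([0,T],H^1)$. Maximality and the blow-up alternative $\lim_{t\uparrow T}\|\nabla u(t)\|_{L^2}=\infty$ follow from a standard iteration of the local theory: since the lifetime depends only on $\|u(t_0)\|_{H^1}$, any time $T<\infty$ at which the $H^1$-norm stays bounded would allow one to extend past $T$, a contradiction; the equivalence \eqref{equivalent gradient norm} then converts the $H^1$-blow-up into a $\dot H^1$-blow-up.

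The conservation of mass and energy is obtained by first regularizing the initial datum, $u_0^{(n)}\to u_0$ in $H^1$ with $u_0^{(n)}\in D(P_c)$, so that the associated solutions $u^{(n)}$ are strong in the sense that the equation holds in $L^2$; for these, pairing with $\bar u^{(n)}$ and with $\partial_t \bar u^{(n)}$ rigorously yields the two conservation laws, and the continuous dependence provided by the contraction scheme together with the norm equivalence lets me pass to the limit, the troublesome potential term $c\int|x|^{-2}|u^{(n)}|^2$ being controlled through Hardy's inequality \eqref{hardy inequality}.

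Finally, for the global existence when $0<\alpha<\frac{4}{d}$, I combine the conservation of mass and energy with the Gagliardo--Nirenberg inequality
\[
\|u\|_{L^{\alpha+2}}^{\alpha+2} \lesssim \|u\|_{L^2}^{\alpha+2-d\alpha/2}\|\nabla u\|_{L^2}^{d\alpha/2}.
\]
Since $d\alpha/2<2$, Young's inequality shows that for each fixed mass the nonlinear term in $E(u)$ can be absorbed into $\tfrac12\|\nabla u\|_{L^2}^2$, producing an a priori bound $\|\nabla u(t)\|_{L^2}\le C(M(u_0),E(u_0))$ on the maximal interval. The blow-up alternative then forces $T=+\infty$. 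The main technical obstacle is really the first step, namely establishing (or invoking) Strichartz estimates for $e^{-itP_c}$ when $c\ne0$, as the singular potential destroys the naive parametrix approach; once these are available, the remainder is a direct adaptation of the classical semilinear Schr\"odinger theory.
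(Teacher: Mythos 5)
Your overall architecture (Duhamel formula, Strichartz estimates, contraction in $L^\infty_t H^1_x\cap L^q_tW^{1,r}_x$, blow-up alternative by iteration, regularization to justify the conservation laws, Gagliardo--Nirenberg plus Young's inequality for global existence when $\alpha<4/d$) is the classical one, and the conservation-law and global-existence parts are sound. However, there is a genuine gap in the local theory, and it is exactly the point the paper warns about: the theorem is claimed for the \emph{full} range $c<\lambda(d)$ in every dimension $d\geq 3$, and the Strichartz contraction scheme does not reach that range. To close the fixed point in $L^q_tW^{1,r}_x$ you must commute a derivative with the propagator; since $\nabla$ does not commute with $e^{-itP_c}$, the standard device is to work with $\sqrt{1+P_c}$ (which does commute with the flow) and then invoke an equivalence of the form $\|\sqrt{P_c}f\|_{L^r}\sim\|\nabla f\|_{L^r}$ in order to estimate the nonlinearity via Sobolev embedding and the chain rule. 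That equivalence is \emph{not} the $L^2$-based statement \eqref{equivalent gradient norm}: for $r\neq 2$ it holds only in a window of exponents depending on $c$ and $d$ (see \cite{KMVZZ-sobolev}), and this window need not contain the exponents required by the contraction for all $c<\lambda(d)$. This is why the Strichartz/Kato route, as implemented in \cite{ZZ,KMVZ-focusing,KMVZZ-energy,LMM}, carries additional restrictions on $c$ and $d$. Your phrase ``the norm equivalence \eqref{equivalent gradient norm} together with its analogue on $H^1_c\sim H^1$'' is precisely the step that fails in general.

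The paper does not prove the statement; it quotes \cite[Theorem 5.1]{OSY-energy}, where \emph{existence} is obtained by a refined abstract energy method that works only with the $L^2$-based quantities $\|u\|_{L^2}$ and $\|u\|_{\dot H^1_c}$ --- for which \eqref{equivalent gradient norm} and Hardy's inequality \eqref{hardy inequality} are valid in the whole range $c<\lambda(d)$ --- while only the \emph{uniqueness} is deduced from Strichartz estimates (a step that does not require the $L^r$-Sobolev equivalence with derivatives). To repair your argument you would have to either (i) restrict the statement to the parameters for which the $\dot H^{1,r}_c\sim\dot H^{1,r}$ equivalence of \cite{KMVZZ-sobolev} covers the needed Strichartz exponents, or (ii) replace the existence step by the energy-method construction of \cite{OSY-energy}, keeping Strichartz only for uniqueness.
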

	We refer the reader to \cite[Theorem 5.1]{OSY-energy} for the proof of this result. Note that the existence of solutions is based on a refined energy method and the uniqueness follows from Strichartz estimates. Note also that Strichartz estimates for the linear NLS with inverse-square potential were first established in \cite{BPSTZ} except the endpoint case $(2,2d/(d-2))$. Recently, Bouclet-Mizutani \cite{BM} proved Strichartz estimates with the full set of admissible pairs for the linear NLS with critical potentials including the inverse-square potential. The local well-posedness for $(\ref{focusing NLS inverse square})$ can be proved using Strichartz estimates and the equivalence between Sobolev spaces defined by $P_c$ and the usual ones via the Kato method. However, due to the appearance of inverse-square potential, the local well-posedness proved by Strichartz estimates requires a restriction on the validity of $c$ and $d$ (see e.g. \cite{ZZ, KMVZ-focusing, KMVZZ-energy, LMM}). 
	
	The main purpose of this paper is to study the stability and instability of standing waves for \eqref{focusing NLS inverse square}. In fact, the stability of standing waves for the nonlinear Schr\"odinger equations is widely pursued by physicists and mathematicians (see \cite{F}, for a review). For the classical nonlinear Schr\"odinger equation, Cazenave and Lions \cite{CL} were the first to prove the orbital stability of standing waves via the concentration-compactness principle. Then, a lot of results on the orbital stability were obtained. For the nonlinear Schr\"odinger equation with a harmonic potential, Zhang \cite{Z} succeed in obtaining the orbital stability by the weighted compactness lemma. Recently, the stability phenomenon was proved for the fractional nonlinear Schr\"{o}dinger equation by establishing the profile decomposition for bounded sequences in $H^s$(se  \cite{FZ,PS,Zhu}). 
    
  The first part of this paper concerns the stability of standing waves in the $L^2$-subcritical case $0<\alpha <\frac{4}{d}$.
Before stating our stability result, let us introduce some notations. For $M>0$, we consider the following variational problems
	\begin{itemize}
		\item for $0<c<\lambda(d)$,
		\begin{align}
		d_M:= \inf \left\{ E(v) \ : \ v \in H^1, \|v\|^2_{L^2}=M  \right\}; \label{variational problem 1}
		\end{align}
		\item for $c<0$,
		\begin{align}
		d_{M,\text{rad}}:= \inf \left\{ E(v) \ : \ v \in H^1_{\text{rad}}, \|v\|^2_{L^2}=M  \right\}, \label{variational problem 2}
		\end{align}
	\end{itemize}
    where $H^1_{\text{rad}}$ is the space of radial $H^1$-functions. Note that in the case $c<0$, we are only interested in radial data. This is related to the fact that the sharp Gagliardo-Nirenberg inequality for non-radial data (see Section $\ref{section preliminaries}$) is never attained when $c<0$. We will see later (Proposition $\ref{proposition variational problems}$) that the above variational problems are well-defined. Moreover, the above infimums are attained. Let us denote
	\begin{itemize}
		\item for $0<c<\lambda(d)$, 
		\[
		S_M:= \left\{ v \in H^1 \ : \ v \text{ is a minimizer of } (\ref{variational problem 1}) \right\};
		\]
		\item for $c<0$,
		\[
		S_{M,\text{rad}}:= \left\{ v \in H^1_{\text{rad}} \ : \ v \text{ is a minimizer of } (\ref{variational problem 2}) \right\}.
		\]
	\end{itemize}
	By the Euler-Lagrange theorem (see Appendix), we see that if $v \in S_M$, then there exists $\omega >0$ such that
	\begin{align}
	-\Delta v - c|x|^{-2} v + \omega v = |v|^\alpha v. \label{elliptic equation}
	\end{align}
	Note also that if $v$ is a solution to $(\ref{elliptic equation})$, then $u(t,x):= e^{i\omega t} v(x)$ is a solution to $(\ref{focusing NLS inverse square})$. One usually calls $e^{i\omega t} v$ the orbit of $v$. Moreover, if $v \in S_M$, i.e. $v$ is a minimizer of $(\ref{variational problem 1})$, then $e^{i\omega t} v$ is also a minimizer of $(\ref{variational problem 1})$ or $e^{i\omega t} v \in S_M$. A similar remark goes for $v \in S_{M,\text{rad}}$. 
	
	We next define the following notion of orbital stability which is similar to the one in \cite{TZ}. 
	\begin{definition} \label{definition orbital stability}
		The set $S_M$ is said to be {\bf orbitally stable} if, for any $\eps>0$, there exists $\delta>0$ such that for any initial data $u_0$ satisfying
		\[
		\inf_{v \in S_M} \|u_0 -v\|_{H^1} <\delta,
		\]
		the corresponding solution $u$ to $(\ref{focusing NLS inverse square})$ satisfies
		\[
		\inf_{v \in S_M} \|u(t)-v\|_{H^1}<\eps,
		\]
		for all $t\geq 0$. A similar definition applies for $S_{M,\text{rad}}$.
	\end{definition}
	Our first result is the following orbital stability of standing waves for the $L^2$-subcritical $(\ref{focusing NLS inverse square})$. 
	\begin{theorem}[Orbital stability] \label{theorem stability}
		Let $d\geq 3$, $0<\alpha<\frac{4}{d}$ and $M>0$. 
		\begin{enumerate}
			\item If $0<c<\lambda(d)$, then $S_M$ is orbitally stable.
			\item If $c<0$, then $S_{M,\emph{rad}}$ is orbitally stable.
		\end{enumerate}
	\end{theorem}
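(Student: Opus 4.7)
I follow the Cazenave--Lions variational scheme, adapted to the inverse-square setting via the profile decomposition of \cite{Bensouilah}; both cases are handled in parallel.

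\emph{Step 1 (Reduction to compactness of minimizing sequences).} I argue by contradiction. If $S_M$ (resp.\ $S_{M,\text{rad}}$) is not orbitally stable, there exist $\varepsilon_0>0$, data $u_{0,n}$ with $\inf_{v\in S_M}\|u_{0,n}-v\|_{H^1}\to 0$, and times $t_n\ge 0$ such that $\inf_{v\in S_M}\|u_n(t_n)-v\|_{H^1}\ge\varepsilon_0$, where $u_n$ is the corresponding (global, by Theorem~\ref{theorem lwp}) solution of \eqref{focusing NLS inverse square}. Conservation of mass and energy yields $M(u_n(t_n))\to M$ and $E(u_n(t_n))\to d_M$. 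The rescaling $w_n:=\sqrt{M/\|u_n(t_n)\|^2_{L^2}}\,u_n(t_n)$ is an $o(1)$ $H^1$-perturbation of $u_n(t_n)$ and satisfies $\|w_n\|_{L^2}^2=M$, $E(w_n)\to d_M$; it therefore suffices to show that any minimizing sequence for \eqref{variational problem 1} (resp.\ \eqref{variational problem 2}) admits a subsequence that, up to multiplication by a phase, converges strongly in $H^1$ to an element of $S_M$ (resp.\ $S_{M,\text{rad}}$).

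\emph{Step 2 (Profile decomposition).} By the sharp Gagliardo--Nirenberg inequality (Section~\ref{section preliminaries}) and $\alpha<4/d$, $d_M$ is finite and $\{w_n\}$ is bounded in $H^1$. Applying the profile decomposition of \cite{Bensouilah}: up to extraction,
\[
w_n=\sum_{j=1}^{J}e^{i\theta_n^j}V^j(\cdot-x_n^j)+r_n^J,
\]
where each shift sequence satisfies $x_n^j\equiv 0$ or $|x_n^j|\to\infty$, the parameters are pairwise orthogonal, and one has Pythagorean decoupling of the $L^2$-mass and of the Hardy norm \eqref{hardy norm} modulo $r_n^J$ and $o_n(1)$, together with $\limsup_n\|r_n^J\|_{L^{\alpha+2}}\to 0$ as $J\to\infty$. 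This forces the nonlinear functional $\int|w_n|^{\alpha+2}\,dx$ to decouple along the profiles. In the radial case ($c<0$) all shifts vanish and every $V^j$ is radial.

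\emph{Step 3 (Selection of a single profile).} Set $M_j:=\|V^j\|_{L^2}^2$ and let $E^{(j)}$ equal $E$ when $x_n^j\equiv 0$ and the free-NLS energy $E^0$ when $|x_n^j|\to\infty$; indeed the translated potential $c|\cdot+x_n^j|^{-2}$ vanishes pointwise in the latter case. Passing to the limit in the energy identity gives
\[
d_M=\lim_n E(w_n)=\sum_{j=1}^{J}E^{(j)}(V^j)+\lim_n E(r_n^J),
\]
with $E^{(j)}(V^j)\ge d^{(j)}_{M_j}$, where $d^{(j)}_{M_j}$ stands for $d_{M_j}$ in the first case and for the free infimum $d^0_{M_j}$ in the second. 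Two facts, both dependent on $\alpha<4/d$, close the argument: (i) the scaling $v\mapsto\sqrt{\mu}v$ applied to a minimizer gives $d_{\mu M}<\mu d_M$ for $\mu>1$, which makes $M\mapsto d_M/M$ strictly decreasing and yields the strict sub-additivity $d_{M'+M''}<d_{M'}+d_{M''}$; (ii) when $0<c<\lambda(d)$ one has the strict gap $d_M<d_M^0$, obtained by inserting a free-NLS ground state into the energy with potential (the $-c|x|^{-2}$ term contributes strictly negatively). In the radial case, radiality alone excludes profiles with diverging shifts. Combining (i), (ii) (or radiality) with the energy identity above forces $J=1$, $M_1=M$, $x_n^1\equiv 0$, and $\|r_n^J\|_{L^2}\to 0$.

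\emph{Step 4 (Strong convergence).} The energy bound becomes $E(V^1)\le d_M$, whence $V^1\in S_M$ (resp.\ $S_{M,\text{rad}}$). From $\|w_n\|_{L^2}^2\to\|V^1\|_{L^2}^2$ together with $E(w_n)\to E(V^1)$ and the Hardy-norm Pythagorean identity \eqref{hardy norm}, combined with the equivalence \eqref{equivalent gradient norm}, the weak convergence $e^{-i\theta_n^1}w_n\rightharpoonup V^1$ is upgraded to strong $H^1$-convergence, contradicting the assumption $\inf_{v\in S_M}\|w_n-v\|_{H^1}\ge\varepsilon_0/2$. The main technical obstacle is precisely the shift dichotomy in Step~2: one must control separately the profiles that stay near the origin (where the potential is felt) and those that escape to infinity (where it disappears), and the strict gap $d_M<d_M^0$ for $0<c<\lambda(d)$ is exactly the input needed to rule out mass leakage to infinity when the problem is not translation invariant; in the repulsive regime $c<0$, the restriction to radial data plays this role.
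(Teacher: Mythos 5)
Your argument is correct and follows the same overall concentration-compactness strategy as the paper (contradiction setup, conservation laws, profile decomposition, reduction to a single profile, strong convergence), but the mechanism by which you exclude splitting of the minimizing sequence is genuinely different. You invoke the classical Lions ingredients: strict subadditivity $d_{M'+M''}<d_{M'}+d_{M''}$ derived from $d_{\mu M}<\mu d_M$ for $\mu>1$, plus the strict gap $d_M<d^0_M$ to kill escaping profiles when $0<c<\lambda(d)$. The paper instead rescales each profile $V^j$ to mass $M$ via $\lambda_j=\sqrt{M}/\|V^j\|_{L^2}\ge 1$, combines $E(\tilde V^j_n)\ge d_M$ with the uniform lower bound $\|v_n\|^{\alpha+2}_{L^{\alpha+2}}\ge (\alpha+2)C_1/2$ (coming from $d_M\le -C_1<0$) to force $\|V^{j_0}\|^2_{L^2}\ge M$ for the largest profile, and only afterwards rules out a diverging shift of that single remaining profile by exactly your gap computation: the potential term vanishes along $|x^{j_0}_n|\to\infty$ and $c>0$ makes the recentered energy strictly smaller than $d_M$, a contradiction. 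For $c<0$ the paper bypasses the profile decomposition entirely and uses the compact embedding $H^1_{\text{rad}}\hookrightarrow L^p$; your observation that radial bounded sequences admit no profiles with diverging shifts (by Strauss-type decay) achieves the same effect. Two small points to tighten: (a) your subadditivity (i) is applied ``to a minimizer'' before existence of minimizers is known --- apply the scaling to near-minimizers instead, using $d_M\le -C_1<0$ to guarantee a uniform positive lower bound on $\|v\|_{L^{\alpha+2}}$; (b) the profile decomposition of Theorem~\ref{theorem profile decomposition} carries no phases $e^{i\theta^j_n}$ and does not pre-normalize the shifts to $0$ or $\infty$, so you should extract convergent-or-divergent subsequences of the $x^j_n$ and absorb convergent limits into the profiles, as is done at the end of the proof of Proposition~\ref{proposition variational problems}. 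Neither point is a genuine gap.
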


   Let us mention that the stability of standing waves in the case $0<c<\lambda(d)$ was studied in \cite{TZ}. However, they only considered radial standing waves in this case. Here, we remove the radially symmetric assumption and prove the stability for non-radial standing waves in the case $0<c<\lambda(d)$. Moreover, our approach is based on the profile decomposition which is of particular interest. We also study the stability of radial standing waves in the case $c<0$, which to our knowledge is new. 
   
   The proof of our stability result is based on the profile decomposition related to $(\ref{focusing NLS inverse square})$. Note that this type of profile decomposition was recently established by the first author in \cite{Bensouilah}. The main difficulty is the lack of space translation invariance due to the inverse-square potential. A careful analysis is thus needed to overcome the difficulty. We refer the reader to Section $\ref{section stability}$ for more details.
   
   The second part of this paper is devoted to the strong instability result in the $L^2$-critical case $\alpha=\frac{4}{d}$. There are two main difficulties in studying this problem. The first difficulty is the lack of regularity of solutions to the elliptic equation
\begin{align}
-\Delta Q - c|x|^{-2} Q + Q = |Q|^{\frac{4}{d}} Q. \label{elliptic equation introduction}
\end{align}
More precisely, we do not know whether $Q \in L^2(|x|^2dx)$ for any solution $Q$ of $(\ref{elliptic equation introduction})$. This is a strong contrast with the classical NLS ($c=0$) where solutions to $(\ref{elliptic equation introduction})$ are known to have an exponential decay at infinity. Another difficulty is that the uniqueness (up to symmetries) of positive radial solutions to $(\ref{elliptic equation introduction})$ is not yet known. To overcome these difficulties, we need to define properly the notion of ground states. To do this, we follow the idea of Csobo-Genoud \cite{CG} and define the set of ground states $\mathcal{G}$ and the set of radial ground states $\mathcal{G}_{\text{rad}}$ (see Section $\ref{section instability}$ for more details). Using this notion of ground states, we are able to show that any ground state $Q$ satisfies $Q \in L^2(|x|^2 dx)$, similarly any radial ground state $Q_{\text{rad}}$ satisfies $Q_{\text{rad}} \in L^2(|x|^2 dx)$. Thanks to this fact, the standard virial identity yields the following instability in the $L^2$-critical case.
\begin{theorem} \label{theorem instability I}
		\begin{enumerate}
			\item Let $d\geq 3, 0<c<\lambda(d)$ and $Q \in \mathcal{G}$. Then the standing wave $e^{it} Q(x)$ is unstable in the following sense: there exists $(u_{0,n})_{n\geq 1} \subset H^1$ such that
			\[
			u_{0,n} \rightarrow Q \text{ strongly in } H^1,
			\]
			as $n\rightarrow \infty$ and the corresponding solution $u_n$ to the $L^2$-critical $(\ref{focusing NLS inverse square})$ with initial data $u_{0,n}$ blows up in finite time for any $n\geq 1$.
			\item Let $d\geq 3, c<0$ and $Q_{\emph{rad}} \in \mathcal{G}_{\emph{rad}}$.
			Then the radial standing wave $e^{i t} Q_{\emph{rad}}(x)$ is unstable in the following sense: there exists $(u_{0,n})_{n\geq 1} \subset H^1$ such that
			\[
			u_{0,n} \rightarrow Q_{\emph{rad}} \text{ strongly in } H^1,
			\]
			as $n\rightarrow \infty$ and the corresponding solution $u_n$ to the $L^2$-critical $(\ref{focusing NLS inverse square})$ with initial data $u_{0,n}$ blows up in finite time for any $n\geq 1$.
		\end{enumerate}
	\end{theorem}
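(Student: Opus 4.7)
The plan is the classical Berestycki--Cazenave approach: construct initial data $u_{0,n}\to Q$ in $H^1$ with negative energy and finite second moment, and invoke the standard virial blow-up. I would simply take
\[
u_{0,n}(x):=\lambda_n Q(x),\qquad \lambda_n\searrow 1^{+},
\]
(and $\lambda_n Q_{\text{rad}}$ in the radial case). Strong $H^1$-convergence to $Q$ is immediate, and because the perturbation is a scalar multiple of $Q$, both $u_{0,n}\in L^2(|x|^{2}dx)$ and $u_{0,n}\in H^1$ are inherited from the corresponding properties of the ground state.

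Two algebraic inputs power the argument. The first is the Pohozaev identity $E(Q)=0$. To obtain it I would test $(\ref{elliptic equation introduction})$ against $\bar Q$ (yielding the Nehari identity $\|Q\|^{2}_{\dot{H}^1_c}+\|Q\|^{2}_{L^2}=\|Q\|^{4/d+2}_{L^{4/d+2}}$) and against $x\cdot\nabla\bar Q$; the key Hardy-type integration by parts
\[
2\,\mathrm{Re}\!\int |x|^{-2}Q\,(x\cdot\nabla\bar Q)\,dx=-(d-2)\!\int |x|^{-2}|Q|^{2}\,dx
\]
exploits the fact that $|x|^{-2}$ scales exactly like $-\Delta$, so the potential and kinetic contributions combine into a single $\tfrac{d-2}{2}\|Q\|^{2}_{\dot{H}^1_c}$ term in Pohozaev; eliminating $\|Q\|^{4/d+2}_{L^{4/d+2}}$ between the two identities then forces $E(Q)=0$. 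A direct computation using $E(Q)=0$ and $\tfrac{4}{d}+2>2$ yields
\[
E(u_{0,n})=\tfrac{1}{2}\|Q\|^{2}_{\dot{H}^1_c}\bigl(\lambda_n^{2}-\lambda_n^{4/d+2}\bigr)<0\qquad\text{for every }\lambda_n>1.
\]

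The second input is the virial identity. Setting $V(t):=\int|x|^{2}|u_n(t,x)|^{2}\,dx$ for the solution $u_n$ to $(\ref{focusing NLS inverse square})$ with data $u_{0,n}$, the same scaling algebra used above gives
\[
V''(t)=8\|u_n(t)\|^{2}_{\dot{H}^1_c}-\tfrac{8d}{d+2}\|u_n(t)\|^{4/d+2}_{L^{4/d+2}}=16\,E(u_{0,n})<0,
\]
by conservation of energy in the $L^2$-critical case. Since the nonnegative quantity $V$ is nonetheless forced by $V''$ to become negative in finite time, the maximal existence time must be finite, and the blow-up alternative of Theorem \ref{theorem lwp} yields $\|\nabla u_n(t)\|_{L^2}\to\infty$.

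The main obstacle is the one already flagged in the introduction: securing $Q\in L^2(|x|^{2}dx)$ so that $V(0)<\infty$. Unlike the classical case $c=0$, there is no exponential decay or uniqueness available, and this cannot be read off from elliptic regularity alone. Following Csobo--Genoud \cite{CG}, the plan is to build the necessary decay directly into the definition of $\mathcal{G}$ (and $\mathcal{G}_{\text{rad}}$) as minimizers of a Weinstein-type Gagliardo--Nirenberg functional associated with $P_c$; the corresponding Euler--Lagrange analysis, combined with the $L^2$-critical scaling, produces the moment bound $|x|Q\in L^2$. The rigorous justification of the virial identity at the $H^1$-level, together with propagation of the moment $V(t)<\infty$ throughout the existence interval, is then handled by the standard truncation-and-regularization scheme adapted to $P_c$. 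The radial case $c<0$ is treated identically, as $\lambda_n Q_{\text{rad}}$ remains radial.
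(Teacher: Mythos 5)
Your overall skeleton coincides with the paper's: the data $u_{0,n}=\lambda_n Q$ with $\lambda_n\downarrow 1$, the Pohozaev identities giving $E(Q)=0$ and hence $E(u_{0,n})<0$ for $\lambda_n>1$, and the virial identity $\frac{d^2}{dt^2}\|xu_n(t)\|_{L^2}^2=16E(u_{0,n})$ followed by the convexity argument are exactly the steps taken in the paper (the Pohozaev computation, including the Hardy-type integration by parts you record, is Remark \ref{remark instability}, and the virial identity is Lemma \ref{lemma virial identity}).

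However, there is a genuine gap at the step you yourself identify as the main obstacle, namely $Q\in L^2(|x|^2dx)$. Your proposed mechanism --- that the Euler--Lagrange analysis of the Weinstein functional ``combined with the $L^2$-critical scaling, produces the moment bound $|x|Q\in L^2$'' --- does not work as stated: the Euler--Lagrange equation of a maximizer of $J^{4/d}_c$ is just the elliptic equation $(\ref{elliptic equation introduction})$, which every solution (ground state or not) satisfies, and the paper explicitly notes that finite variance is \emph{not} known for general $H^1$-solutions of that equation (no exponential decay, no uniqueness). So no amount of Euler--Lagrange bookkeeping at the level of the variational problem extracts spatial decay. The paper's actual argument is an indirect dynamical one: apply the pseudo-conformal transformation to the global standing wave $e^{it}Q$ to produce a solution $u_T$ blowing up at time $T$ with $\|u_T(0)\|_{L^2}=M_{\mathrm{gs}}$, i.e.\ a \emph{minimal-mass} blow-up solution; then invoke the characterization of minimal-mass blow-up (the compactness/limiting-profile result of \cite{BD} and the finite-variance property established in \cite[p.127--128]{CG}) to conclude $u_T(t)\in L^2(|x|^2dx)$, and hence, since $|u_T(0,x)|=T^{-d/2}|Q(x/T)|$, that $Q\in L^2(|x|^2dx)$. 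This is where the ground-state (minimal-mass) hypothesis $Q\in\mathcal G$ is genuinely used, and it is precisely the ingredient missing from your outline; without it (or some substitute such as the radial localized-virial route of Theorem \ref{theorem instability II}, which avoids finite variance altogether but requires radiality), the blow-up argument cannot be launched because $V(0)=\|xu_{0,n}\|_{L^2}^2$ is not known to be finite.
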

If we are interested in radial $H^1$-solutions to $(\ref{elliptic equation introduction})$, then we can show another version of instability of standing waves. The interest of this instability is that it allows radial $H^1$-solutions of $(\ref{elliptic equation introduction})$ whose $L^2$-norms are greater than the $L^2$-norms of ground states. We refer the reader to Section $\ref{section instability}$ for more details.

   The paper is organized as follows. In Section $\ref{section preliminaries}$, we recall sharp Gagliardo-Nirenberg inequalities and the profile decomposition related to $(\ref{focusing NLS inverse square})$. In Section $\ref{section stability}$, we give the proof of the stability result stated in Theorem $\ref{theorem stability}$. Finally, we study the strong instability of standing waves in Section $\ref{section instability}$.

    %\Iheb{ I will add some words here.}
		
	\section{Preliminaries} 
    \label{section preliminaries}
	\setcounter{equation}{0}
	\subsection{Sharp Gagliardo-Nirenberg inequalities}
	In this section, we recall the sharp Gagliardo-Nirenberg associated to $(\ref{focusing NLS inverse square})$, namely
	\begin{align}	
	\|u\|^{\alpha +2}_{L^{\alpha+2}} \leq C_{\text{GN}}(c) \|u\|^{\frac{4-(d-2)\alpha}{2}}_{L^2} \|u\|^{\frac{d\alpha}{2}}_{\dot{H}^1_c}, \label{gagliardo nirenberg inequality}
	\end{align}	
	for all $u \in H^1$. The sharp constant $C_{\text{GN}}(c)$ is defined by	
	\[	
	C_{\text{GN}}(c):= \sup \left\{ J^\alpha_c(u) \ : \ u \in H^1 \backslash \{0\} \right\},	
	\]	
	where $J^\alpha_c(u)$ is the Weinstein functional	
	\begin{align}
	J^\alpha_c(u):= \frac{\|u\|^{\alpha+2}_{L^{\alpha+2}} }{  \|u\|^{\frac{4-(d-2)\alpha}{2}}_{L^2} \|u\|^{\frac{d\alpha}{2}}_{\dot{H}^1_c} }.	\label{weinstein functional}
	\end{align}
	We also recall the sharp radial Gagliardo-Nirenberg inequality, namely	
	\begin{align}	
	\|u\|^{\alpha+2}_{L^{\alpha+2}} \leq C_{\text{GN}}(c,\text{rad}) \|u\|^{\frac{4-(d-2)\alpha}{2}}_{L^2} \|u\|^{\frac{d\alpha}{2}}_{\dot{H}^1_c}, \label{radial gagliardo-nirenberg inequality}	
	\end{align}	
	for all $u \in H^1_{\text{rad}}$. The sharp constant $C_{\text{GN}}(c,\text{rad})$ is defined by	
	\[	
	C_{\text{GN}}(c,\text{rad}) : = \sup \left\{ J^\alpha_c(u) \ : \ u \in H^1_{\text{rad}} \backslash \{0\} \right\}.
	\]
	In the case $c=0$, it is well known (see \cite{Weinstein}) that the sharp constant $C_{\text{GN}}(0)$ is attained by the function $Q_0$ which is the unique (up to symmetries) positive radial solution of	
	\begin{align}	
	-\Delta Q_0 + Q_0 = |Q_0|^{\alpha} Q_0. \label{ground state equation c=0}	
	\end{align}	
	In the case $c\ne 0$ and $c<\lambda(d)$, we have the following result (see \cite{KMVZ-focusing} and also \cite{Dinh}).	
	\begin{theorem}[Sharp Gagliardo-Nirenberg inequality] \label{theorem sharp gagliardo-nirenberg}		
		Let $d\geq 3$, $0<\alpha<\frac{4}{d-2}$ and $c\ne 0$ be such that $c<\lambda(d)$. Then $C_{\emph{GN}}(c) \in (0,\infty)$ and		
		\begin{itemize}			
			\item if $0<c<\lambda(d)$, then the equality in $(\ref{gagliardo nirenberg inequality})$ is attained by a function $Q_c \in H^1$ which is a positive radial solution to the elliptic equation			
			\begin{align}			
			-\Delta Q_c - c|x|^{-2} Q_c + Q_c = |Q_c|^{\alpha} Q_c. \label{ground state equation}			
			\end{align}			
			\item if $c<0$, then $C_{\emph{GN}}(c) = C_{\emph{GN}}(0)$ and the equality in $(\ref{radial gagliardo-nirenberg inequality})$ is never attained. However, the constant $C_{\emph{GN}}(c,\emph{rad})$ is attained by a function $Q_{c,\emph{rad}} \in H^1$ which is a positive solution to the elliptic equation			
			\begin{align}			-
			\Delta Q_{c,\emph{rad}} - c|x|^{-2} Q_{c,\emph{rad}} + Q_{c,\emph{rad}} = |Q_{c,\emph{rad}}|^{\alpha} Q_{c,\emph{rad}}. \label{radial ground state equation}			
			\end{align}			
		\end{itemize}		
	\end{theorem}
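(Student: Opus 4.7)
My plan is to attack both cases of Theorem~\ref{theorem sharp gagliardo-nirenberg} through a common variational framework---maximizing the Weinstein functional $J^\alpha_c$---with compactness coming from different sources depending on the sign of $c$. Finiteness of $C_{\text{GN}}(c)$ is immediate from the equivalence \eqref{equivalent gradient norm} combined with the classical Gagliardo-Nirenberg inequality. Scaling invariance of $J^\alpha_c$ under $u(x) \mapsto \mu u(\lambda x)$ allows me to normalize any maximizing sequence $(u_n)$ so that $\|u_n\|_{L^2} = \|u_n\|_{\dot{H}^1_c} = 1$, producing an $H^1$-bounded sequence.

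For $0<c<\lambda(d)$, I would exploit that $|x|^{-2}$ is radially symmetric and decreasing. The Schwarz rearrangement $u_n^*$ preserves $L^p$ norms, decreases $\|\nabla u_n\|_{L^2}$ (Polya-Szego), and increases $\int|x|^{-2}|u_n|^2$ (Hardy-Littlewood); since $c>0$, both effects lower the denominator of $J^\alpha_c$, so $J^\alpha_c(u_n^*) \geq J^\alpha_c(u_n)$ and I may assume each $u_n$ is nonnegative and radial. The Strauss compact embedding $H^1_{\text{rad}} \hookrightarrow L^{\alpha+2}$ then yields a subsequence converging strongly in $L^{\alpha+2}$ to some $Q_c$, which is non-trivial because $J^\alpha_c(u_n) \to C_{\text{GN}}(c) > 0$. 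Weak lower semi-continuity of $\|\cdot\|_{L^2}$ and $\|\cdot\|_{\dot{H}^1_c}$ then forces $J^\alpha_c(Q_c) \geq C_{\text{GN}}(c)$, hence equality. A rescaling $Q_c(x) \mapsto aQ_c(bx)$ converts the Euler-Lagrange equation into \eqref{ground state equation}, and positivity follows from $|Q_c|$ being a maximizer combined with elliptic regularity.

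For $c<0$, the potential term is now positive, so $\|u\|_{\dot{H}^1_c}^2 \geq \|u\|_{\dot{H}^1}^2$ and $C_{\text{GN}}(c) \leq C_{\text{GN}}(0)$. The reverse bound comes from testing against translates $Q_0(\cdot - x_n)$ with $|x_n|\to\infty$: the $L^p$ norms and $\|\nabla\cdot\|_{L^2}$ are translation invariant, while dominated convergence yields $\int|x|^{-2}|Q_0(x-x_n)|^2\,dx \to 0$, so $J^\alpha_c(Q_0(\cdot-x_n)) \to J^\alpha_0(Q_0) = C_{\text{GN}}(0)$. Non-attainment is then immediate: equality $J^\alpha_c(u^*) = C_{\text{GN}}(0) = J^\alpha_0(u^*)$ would force $\int|x|^{-2}|u^*|^2 = 0$, contradicting $u^*\not\equiv 0$. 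For the radial constant $C_{\text{GN}}(c,\text{rad})$, Schwarz symmetrization is now counterproductive (it would raise the denominator), but the restriction to $H^1_{\text{rad}}$ already supplies Strauss compactness, so a normalized radial maximizing sequence has a non-trivial strong $L^{\alpha+2}$ limit $Q_{c,\text{rad}}$ which is a maximizer, and \eqref{radial ground state equation} follows by the same rescaling.

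The main obstacle I anticipate is ensuring non-triviality of the weak $H^1$ limit along a normalized maximizing sequence. In both the $c>0$ and the radial $c<0$ settings, working inside $H^1_{\text{rad}}$ provides a compact Sobolev embedding that converts the strict positivity $C_{\text{GN}}(c) > 0$ (or $C_{\text{GN}}(c,\text{rad}) > 0$) into a uniform $L^{\alpha+2}$ lower bound on the limit, preventing vanishing. The necessity of the hypothesis $c<\lambda(d)$ enters precisely through Hardy's inequality \eqref{hardy inequality}, which keeps $\|\cdot\|_{\dot{H}^1_c}$ a norm equivalent to $\|\cdot\|_{\dot{H}^1}$ even when $c>0$ forces the subtraction of the potential term.
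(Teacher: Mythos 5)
Your argument is correct and is essentially the proof this paper relies on: the paper does not prove Theorem \ref{theorem sharp gagliardo-nirenberg} itself but defers to \cite[Theorem 3.1]{KMVZ-focusing} and \cite[Theorem 4.1]{Dinh}, where exactly your scheme is carried out --- Schwarz symmetrization (which lowers $\|\cdot\|_{\dot{H}^1_c}$ when $0<c<\lambda(d)$) combined with the compact embedding $H^1_{\text{rad}} \hookrightarrow L^{\alpha+2}$ and weak lower semicontinuity for attainment, translation of $Q_0$ to spatial infinity for $C_{\text{GN}}(c)=C_{\text{GN}}(0)$ and non-attainment when $c<0$, and the same radial compactness for $C_{\text{GN}}(c,\text{rad})$. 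The only details worth writing out are the re-normalization of the sequence after symmetrization and, in the radial case, the fact that a maximizer over $H^1_{\text{rad}}$ satisfies the Euler--Lagrange equation against all (not merely radial) test functions, which follows from the rotation invariance of $J^\alpha_c$ (symmetric criticality).
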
	
	We refer the reader to \cite[Theorem 3.1]{KMVZ-focusing} for the proof in the case $d=3$ and $\alpha=2$ and to \cite[Theorem 4.1]{Dinh} for the proof in the general case.
	
	\subsection{Profile decomposition}
	We next recall the profile decomposition related to the nonlinear Schr\"odinger equation with inverse-square potential. 
	\begin{theorem}[Profile decomposition \cite{Bensouilah}] \label{theorem profile decomposition}
		Let $d\geq 3$ and $c \ne 0$ be such that $c<\lambda(d)$. Let $(v_n)_{n\geq 1}$ be a bounded sequence in $H^1$. Then there exist a subsequence still denoted by $(v_n)_{n\geq 1}$, a family $(x^j_n)_{n\geq 1}$ of sequences in $\R^d$ and a sequence $(V^j)_{j\geq 1}$ of $H^1$-functions such that
		\begin{enumerate}
			\item for every $j\ne k$, 
			\begin{align}
			|x^j_n-x^k_n| \rightarrow \infty, \label{pointwise orthogonality}
			\end{align}
			as $n\rightarrow \infty$;
			\item for every $l \geq 1$ and every $x \in \R^d$, we have
			\[
			v_n(x) = \sum_{j=1}^l V^j(x-x^j_n) + v^l_n(x), 
			\]
			with 
			\begin{align}
			\limsup_{n\rightarrow \infty} \|v^l_n\|_{L^p} \rightarrow 0, \label{profile error}
			\end{align}
			as $l \rightarrow \infty$ for any $2<p<\frac{2d}{d-2}$.
		\end{enumerate}
		Moreover, for every $l\geq 1$, we have
		\begin{align}
		\|v_n\|^2_{L^2} &=\sum_{j=1}^l \|V^j\|^2_{L^2} + \|v^l_n\|^2_{L^2} +o_n(1), \label{L2 expansion} \\
		\|\nabla v_n\|^2_{L^2} &= \sum_{j=1}^l \|\nabla V^j\|^2_{L^2} + \|\nabla v^l_n\|^2_{L^2} + o_n(1), \label{dot H1 expansion} \\
		\|v_n\|^2_{\dot{H}^1_c} &=\sum_{j=1}^l \|V^j(\cdot-x^j_n)\|^2_{\dot{H}^1_c} + \|v^l_n\|^2_{\dot{H}^1_c} +o_n(1), \label{hardy norm expansion} \\
		\|v_n\|^{\alpha+2}_{L^{\alpha+2}} &=\sum_{j=1}^l \|V^j\|^{\alpha+2}_{L^{\alpha+2}} + \|v^l_n\|^{\alpha+2}_{L^{\alpha+2}} +o_n(1), \label{lebesgue norm expansion}
		\end{align}
		for any $0<\alpha<\frac{4}{d-2}$.
	\end{theorem}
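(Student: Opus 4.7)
The plan is to adapt the iterative bubble-extraction scheme of Hmidi--Keraani to the inverse-square setting. The starting point is a refined Sobolev embedding for bounded sequences in $H^1$: for any $p \in (2, 2d/(d-2))$ there exist $\theta \in (0,1)$ and $C>0$ such that
\[
\|u\|_{L^p} \leq C \|u\|_{H^1}^{1-\theta} \Big(\sup_{y \in \R^d} \|u\|_{L^2(B(y,1))}\Big)^{\theta},
\]
which follows from the Rellich--Kondrachov embedding together with a ball-covering argument.

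First I would extract the first profile. If $\limsup_n \|v_n\|_{L^p}=0$ the decomposition is trivial with all $V^j=0$; otherwise the refined embedding yields centers $x_n^1 \in \R^d$ such that, along a subsequence, $v_n(\cdot+x_n^1) \rightharpoonup V^1 \not\equiv 0$ weakly in $H^1$. Setting $v_n^1(x):=v_n(x)-V^1(x-x_n^1)$, I would iterate the procedure on $v_n^1$ to produce $V^2$ with centers $x_n^2$, and so on. The asymptotic orthogonality \eqref{pointwise orthogonality} is forced by the construction: if $(x_n^j-x_n^k)$ stayed bounded along a subsequence, passing to the limit would show that $V^j$ persists inside the weak limit of $v_n^{k-1}(\cdot+x_n^k)$, contradicting the fact that this weak limit is $V^k$ after all earlier profiles have been removed.

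Once profiles and centers are in hand, the $L^2$ and $\dot{H}^1$ identities \eqref{L2 expansion}, \eqref{dot H1 expansion} follow from weak convergence combined with the vanishing of cross terms $\int \nabla V^j(x-x_n^j)\cdot \overline{\nabla V^k(x-x_n^k)}\,dx$ as $n\to\infty$, a standard consequence of $|x_n^j-x_n^k|\to\infty$. The $L^{\alpha+2}$ identity \eqref{lebesgue norm expansion} follows from a Brezis--Lieb type argument coupled to the same cross-term vanishing. The $L^p$ smallness \eqref{profile error} of the remainder comes from the observation that each extracted bubble captures a definite fraction of $\sup_y \|v_n^{l}\|_{L^2(B(y,1))}$, so in view of the uniform $L^2$ bound this supremum must tend to zero as $l\to\infty$; the refined embedding then kills $\|v_n^l\|_{L^p}$.

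The hard part will be the Hardy-norm identity \eqref{hardy norm expansion}, because $c|x|^{-2}$ breaks translation invariance and thus $\|V^j(\cdot-x_n^j)\|_{\dot{H}^1_c}^2$ genuinely depends on $n$. Writing $\|u\|_{\dot{H}^1_c}^2=\|\nabla u\|_{L^2}^2-c\int |x|^{-2}|u|^2\,dx$ and invoking \eqref{dot H1 expansion}, the task reduces to proving the orthogonal decomposition of $\int |x|^{-2}|v_n|^2\,dx$. To handle this I would, after refining subsequences, partition the profile indices into those with bounded centers (where one may assume $x_n^j\equiv 0$ and pass to the limit by dominated convergence, using that $|x|^{-2}|V^j|^2 \in L^1$ by Hardy) and those with $|x_n^j|\to\infty$ (where the mass of $V^j(\cdot-x_n^j)$ escapes the singularity at the origin, so $\int |x|^{-2}|V^j(x-x_n^j)|^2\,dx$ becomes small). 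Expanding $|v_n|^2$ into the diagonal profile contributions, the remainder term and the off-diagonal cross terms, and estimating each cross integral via Cauchy--Schwarz combined with Hardy's inequality and the asymptotic separation of the centers, should yield \eqref{hardy norm expansion}.
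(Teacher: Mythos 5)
The paper does not prove this theorem: it is quoted verbatim from the first author's earlier work and the reader is referred to \cite[Theorem 4]{Bensouilah} for the proof. Your proposal follows the same Hmidi--Keraani bubble-extraction scheme that the cited reference uses (refined Sobolev inequality via a ball-covering, iterative extraction of weak limits of translates, almost-orthogonality of the $L^2$ and $\dot H^1$ norms from the divergence of the centers, Brezis--Lieb for the $L^{\alpha+2}$ norm, and smallness of the remainder from the summability of $\sum_j\|V^j\|_{L^2}^2$), and you correctly identify that the only genuinely new point relative to the classical case is the expansion \eqref{hardy norm expansion} of the non-translation-invariant Hardy term --- which is exactly why the statement retains the $n$-dependent quantities $\|V^j(\cdot-x^j_n)\|^2_{\dot H^1_c}$ rather than their limits. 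One detail in that last step deserves more care than your sketch gives it: for the cross terms between a profile and the remainder, Cauchy--Schwarz with Hardy's inequality only gives boundedness, not decay, and you cannot pair $v^l_n\rightharpoonup 0$ directly against $|x|^{-2}V^j(\cdot-x^j_n)$ since the latter need not lie in $L^2$. The correct pairing is $\int (|x|^{-1}V^j(x-x^j_n))\,(|x|^{-1}\overline{v^l_n(x)})\,dx$, using that $u\mapsto |x|^{-1}u$ is bounded on $\dot H^1\to L^2$ (hence weakly continuous), that $|x|^{-1}V^j(\cdot-x^j_n)$ converges strongly in $L^2$ along a subsequence (to $0$ when $|x^j_n|\to\infty$, by a compact-support approximation), and that $|x|^{-1}v^l_n\rightharpoonup 0$ in $L^2$. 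With that fix the argument closes and is in substance the proof of the cited reference.
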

	We refer the reader to \cite[Theorem 4]{Bensouilah} for the proof of Theorem $\ref{theorem profile decomposition}$. We note here that the profile decomposition argument was first proposed by G\'erard in \cite{Gerard}. Later, Hmidi and Keraani \cite{HK} gave a refined version and used it to give a simple proof of some dynamical properties of blow-up solutions for the classical nonlinear Schr\"odinger equation. Using the same idea as in \cite{HK}, the profile decomposition of bounded sequences in $H^2$ and $H^s$ ($0<s<1$) were then established in \cite{ZZY, Zhu-blow-up-fnls} to study dynamical aspects of blow-up solutions for the fourth-order nonlinear Schr\"odinger equation and the fractional nonlinear Schr\"odinger equation. The profile decomposition was also successfully used to study the stability of standing waves for the fractional nonlinear Schr\"odinger equation (see e.g. \cite{Zhu, FZ, PS}).

	\section{Orbital stability of standing waves} \label{section stability}
	\setcounter{equation}{0}
	In this section, we will give the proof of the stability result stated in Theorem $\ref{theorem stability}$.
	Let us firstly study the variational problems $(\ref{variational problem 1})$ and $(\ref{variational problem 2})$ by using the profile decomposition of bounded sequences in $H^1$. 
	\begin{proposition} \label{proposition variational problems}
		Let $d\geq 3$, $0<\alpha<\frac{4}{d}$ and $M>0$. 
		\begin{enumerate}
			\item If $0<c<\lambda(d)$, then the variational problem $(\ref{variational problem 1})$ is well-defined and there exists $C_1>0$ such that 
			\begin{align}
			d_M \leq -C_1<0. \label{upper bound dM}
			\end{align}
			Moreover, there exists $v\in H^1$ such that $E(v) = d_M$.
			\item If $c<0$, then the variational problem $(\ref{variational problem 2})$ is well-defined and there exists $C_2>0$ such that
			\begin{align}
			d_{M,\emph{rad}} \leq -C_2<0. \label{upper bound dM radial}
			\end{align}
			Moreover, there exists $v\in H^1_{\emph{rad}}$ such that $E(v) = d_{M,\emph{rad}}$.
		\end{enumerate}
	\end{proposition}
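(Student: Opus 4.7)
The plan is to handle (1) and (2) in parallel in three stages: pin down $-\infty < d_M \leq -C_1 < 0$ via Gagliardo-Nirenberg and an $L^2$-preserving dilation; establish the strict monotonicity $\theta > 1 \Rightarrow d_{\theta M'} < \theta d_{M'}$ by a mass rescaling; and then extract a minimizer from any minimizing sequence by (case (1)) the profile decomposition of Theorem \ref{theorem profile decomposition}, or (case (2)) the Strauss compact embedding $H^1_{\text{rad}} \hookrightarrow L^{\alpha+2}$. Under the constraint $\|v\|^2_{L^2}=M$, the sharp Gagliardo-Nirenberg inequality \eqref{gagliardo nirenberg inequality} (respectively \eqref{radial gagliardo-nirenberg inequality}) yields $E(v) \geq \tfrac{1}{2}\|v\|^2_{\dot{H}^1_c} - C_M\|v\|^{d\alpha/2}_{\dot{H}^1_c}$, bounded below uniformly in $v$ because $d\alpha/2 < 2$. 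Negativity is obtained by the $L^2$-preserving dilation $v_\lambda(x) := \lambda^{d/2}v(\lambda x)$: a direct change of variables gives $E(v_\lambda) = \tfrac{\lambda^2}{2}\|v\|^2_{\dot{H}^1_c} - \tfrac{\lambda^{d\alpha/2}}{\alpha+2}\|v\|^{\alpha+2}_{L^{\alpha+2}}$, which is strictly negative for $\lambda$ small.

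The next ingredient is strict subadditivity. Applying the mass rescaling $v\mapsto\theta^{1/2}v$ (with $\theta>1$) to a minimizing sequence $(v_n)$ for $d_{M'}$ gives $E(\theta^{1/2}v_n) = \theta E(v_n) - \tfrac{\theta^{(\alpha+2)/2}-\theta}{\alpha+2}\|v_n\|^{\alpha+2}_{L^{\alpha+2}}$; since $\theta^{(\alpha+2)/2} > \theta$ and $\|v_n\|^{\alpha+2}_{L^{\alpha+2}}$ stays bounded below (any subsequence along which it vanishes would drive $E(v_n)$ to a non-negative limit, contradicting $d_{M'}<0$), I conclude $d_{\theta M'} < \theta d_{M'}$. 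Consequently $M\mapsto d_M/M$ is strictly decreasing, which forces both strict monotonicity of $d_M$ in $M$ and strict subadditivity $d_{M_1+M_2} < d_{M_1}+d_{M_2}$.

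For case (1) I take a minimizing sequence $(v_n)\subset H^1$, bounded in $H^1$ via Gagliardo-Nirenberg and \eqref{equivalent gradient norm}, and apply Theorem \ref{theorem profile decomposition}: $v_n = \sum_{j=1}^l V^j(\cdot-x^j_n) + v^l_n$. After passing to subsequences, for each $j$ either $x^j_n\to x^{j,\infty}\in\R^d$ or $|x^j_n|\to\infty$; in the latter case, a standard density argument yields $\int|x|^{-2}|V^j(x-x^j_n)|^2 dx \to 0$, so the asymptotic energy of the $j$-th profile equals the classical $c=0$ energy $E^0(V^j) := \tfrac{1}{2}\|\nabla V^j\|^2_{L^2} - \tfrac{1}{\alpha+2}\|V^j\|^{\alpha+2}_{L^{\alpha+2}}$. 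Combining the expansions \eqref{L2 expansion}, \eqref{hardy norm expansion}, \eqref{lebesgue norm expansion} with $\limsup_n\|v^l_n\|_{L^{\alpha+2}}\to 0$ as $l\to\infty$ and the non-negativity of the Hardy functional yields $d_M\geq \sum_{j\geq 1}E^j$ with $E^j\geq d_{M^j}$ and $\sum M^j\leq M$; the strict subadditivity and monotonicity force exactly one nonzero profile $V^1$ with $M^1=M$. To rule out $|x^1_n|\to\infty$, I exploit the strict inequality $d_M<d^0_M$, obtained by testing $E$ at the classical Weinstein minimizer $Q_0$ of mass $M$ from \cite{Weinstein}: $E(Q_0) = d^0_M - \tfrac{c}{2}\int|x|^{-2}|Q_0|^2 dx < d^0_M$ by Hardy and $c>0$. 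Escape would therefore produce the contradiction $d_M\geq E^1 = E^0(V^1)\geq d^0_M > d_M$; hence $x^1_n\to x^{1,\infty}$ along a subsequence and $V^1(\cdot-x^{1,\infty})\in H^1$ attains $d_M$.

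For case (2) I bypass the profile decomposition: the Strauss compact embedding $H^1_{\text{rad}}\hookrightarrow L^{\alpha+2}$ (valid since $2<\alpha+2<2d/(d-2)$) allows me to extract a subsequence with $v_n\rightharpoonup v$ in $H^1$ and $v_n\to v$ strongly in $L^{\alpha+2}$. Weak lower semicontinuity of the (non-negative, since $c<0$) Hardy functional and strong convergence of the $L^{\alpha+2}$ term give $E(v)\leq\liminf E(v_n) = d_{M,\text{rad}}$, and $v\in H^1_{\text{rad}}$. If $0<\|v\|^2_{L^2}<M$ (the case $v\equiv 0$ is excluded because $E(v)=0>d_{M,\text{rad}}$), then strict monotonicity forces $d_{\|v\|^2_{L^2},\text{rad}} > d_{M,\text{rad}}$, contradicting $d_{\|v\|^2_{L^2},\text{rad}}\leq E(v)\leq d_{M,\text{rad}}$. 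Hence $\|v\|^2_{L^2}=M$ and $v$ is a minimizer. The main obstacle throughout is the loss of translation invariance introduced by the inverse-square potential: when a profile escapes to spatial infinity the Hardy term evaporates and the profile bookkeeping would collapse, and it is precisely the strict inequality $d_M<d^0_M$ in case (1) (and the radial compact embedding in case (2)) that prevents this pathology.
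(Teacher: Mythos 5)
Your argument is correct, and it reaches the same conclusion by a genuinely different route. Where the paper never proves subadditivity, you follow the classical Cazenave--Lions scheme: you first establish $d_{\theta M'}<\theta d_{M'}$ for $\theta>1$ via the rescaling $v\mapsto \theta^{1/2}v$ (your lower bound $\|v_n\|_{L^{\alpha+2}}^{\alpha+2}\geq -(\alpha+2)E(v_n)$ is exactly what makes this work), deduce that $M\mapsto d_M/M$ is strictly decreasing, and then use strict monotonicity and subadditivity to kill all but one profile and to force that profile to carry the full mass $M$. The paper instead normalizes each profile to mass $M$ by setting $\tilde V^j_n=\lambda_j V^j(\cdot-x^j_n)$ with $\lambda_j=\sqrt{M}/\|V^j\|_{L^2}\geq 1$, compares $E(\tilde V^j_n)\geq d_M$ term by term in the energy expansion, and extracts $\|V^{j_0}\|_{L^2}^2\geq M$ from the resulting inequality together with the upper bound \eqref{upper bound dM}; this avoids any discussion of $d_{M'}$ for $M'\neq M$ but is essentially an inlined version of the same dichotomy argument. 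The second divergence is in ruling out escape of the surviving profile when $0<c<\lambda(d)$: you compare with the free problem via $d_M\geq E^0(V^1)\geq d_M^0>d_M$, which requires the classical fact (Weinstein, Cazenave--Lions) that $d_M^0$ is attained so that $d_M<d_M^0$ can be read off by testing $E$ at a free minimizer; the paper's route is more economical and self-contained, since escape gives $d_M=E^0(V^{j_0})=E(V^{j_0})+\frac{c}{2}\int|x|^{-2}|V^{j_0}|^2\,dx>E(V^{j_0})\geq d_M$ directly, with no reference to the $c=0$ problem. Your treatment of the radial case via the compact embedding and strict monotonicity (rather than the paper's repeated rescaling trick) is also fine. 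In short, your proof buys the extra structural information $d_{M_1+M_2}<d_{M_1}+d_{M_2}$ at the cost of one external classical input, while the paper's proof is longer in the bookkeeping but needs nothing beyond Theorem \ref{theorem profile decomposition} and the sharp Gagliardo--Nirenberg inequality \eqref{gagliardo nirenberg inequality}.
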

	\begin{proof}
		(1) Let us firstly consider the case $0<c<\lambda(d)$. Let $v \in H^1$ be such that $\|v\|^2_{L^2} = M$. By the sharp Gagliardo-Nirenberg inequality $(\ref{gagliardo nirenberg inequality})$, we have
		\begin{align*}
		E(v)&=\frac{1}{2} \|v\|^2_{\dot{H}^1_c} - \frac{1}{\alpha+2} \|v\|^{\alpha+2}_{L^{\alpha+2}} \\
		&\geq \frac{1}{2} \|v\|^2_{\dot{H}^1_c} -\frac{C_{\text{GN}}(c)}{\alpha+2} \|v\|^{\frac{4-(d-2)\alpha}{2}}_{L^2} \|v\|^{\frac{d\alpha}{2}}_{\dot{H}^1_c} \\
		&= \frac{1}{2} \|v\|^2_{\dot{H}^1_c} -\frac{C_{\text{GN}}(c)}{\alpha+2} M^{\frac{4-(d-2)\alpha}{4}} \|v\|^{\frac{d\alpha}{2}}_{\dot{H}^1_c}.
		\end{align*}
		Since $0<\frac{d\alpha}{2}<2$, we apply the Young's inequality, that is for any $a,b>0$, any $\eps>0$ and any $1<p,q<\infty$ satisfying $\frac{1}{p}+\frac{1}{q}=1$, there exists $C(\eps, p,q)>0$ such that
		\[
		ab \leq \eps a^p + C(\eps, p,q) b^q,
		\]
		to have
		\[
		\frac{C_{\text{GN}}(c)}{\alpha+2} M^{\frac{4-(d-2)\alpha}{4}} \|v\|^{\frac{d\alpha}{2}}_{\dot{H}^1_c} \leq \eps \|v\|^2_{\dot{H}^1_c} + C(\eps, d, \alpha, M). 
		\]
		We thus get
		\begin{align}
		E(v) \geq \left(\frac{1}{2}-\eps\right) \|v\|^2_{\dot{H}^1_c} - C(\eps, d,\alpha, M). \label{lower bound energy}
		\end{align}
		By choosing $0<\eps<\frac{1}{2}$, we see that $E(v) \geq - C(\eps, d,\alpha,M)$. This shows that the variational problem $(\ref{variational problem 1})$ is well-defined. 
		
		For $\lambda>0$, we define $v_\lambda(x):= \lambda^{\frac{d}{2}} v(\lambda x)$. It is easy to check that $\|v_\lambda\|^2_{L^2} = \|v\|^2_{L^2} =M$ and
		\[
		E(v_\lambda) = \frac{\lambda^2}{2} \|v_\lambda\|^2_{\dot{H}^1_c} - \frac{\lambda^{\frac{d\alpha}{2}}}{\alpha+2} \|v\|^{\alpha+2}_{L^{\alpha+2}}.
		\]
		Since $0<\frac{d\alpha}{2}<2$, one can find a value $\lambda_1>0$ sufficiently small so that $E(v_{\lambda_1})<0$. Taking $C_1:= -E(v_{\lambda_1})>0$, one gets $(\ref{upper bound dM})$.
			
		Let $(v_n)_{n\geq 1}$ be a minimizing sequence of $d_M$, that is $\|v_n\|^2_{L^2} = M$ for all $n\geq 1$ and $\lim_{n\rightarrow \infty} E(v_n) = d_M$. There exists $C>0$ such that
		\[
		E(v_n) \leq d_M + C,
		\]
		for all $n \geq 1$. For $0<\eps<\frac{1}{2}$ fixed, we use $(\ref{lower bound energy})$ to infer
		\[
		\left(\frac{1}{2}-\eps \right) \|v_n\|^2_{\dot{H}^1_c} \leq d_M +C + C(\eps, d, \alpha, M).
		\]
		This shows that $\|v_n\|_{\dot{H}^1_c}$ (hence $\|v_n\|_{\dot{H}^1}$) is bounded for all $n\geq 1$. In particular $(v_n)_{n\geq 1}$ is a bounded sequence in $H^1$. Therefore, the profile decomposition given in Theorem $\ref{theorem profile decomposition}$ implies that up to a subsequence, we can write for every $l\geq 1$,
		\begin{align}
		v_n(x) = \sum_{j=1}^l V^j(x-x^j_n) +v^l_n(x), \label{profile decomposition}
		\end{align}
		and $(\ref{pointwise orthogonality})-(\ref{lebesgue norm expansion})$ hold. In particular, we have
		\begin{align}
		E(v_n) = \sum_{j=1}^l E(V^j(\cdot-x^j_n)) + E(v^l_n) + o_n(1), \label{energy expansion}
		\end{align}
		as $n\rightarrow \infty$. Denote $\tilde{V}^j_n(x):= \lambda_j V^j(x-x^j_n)$ and $\tilde{v}^l_n(x) = \lambda^l_n v^l_n(x)$, where
		\[
		\lambda_j := \frac{\sqrt{M}}{\|V^j\|_{L^2}} \geq 1, \quad \lambda_n^l:= \frac{\sqrt{M}}{\|v^l_n\|_{L^2}} \geq 1.
		\]
		We readily check that
		\[
		\|\tilde{V}^j_n\|^2_{L^2} = \lambda_j^2 \|V^j(\cdot- x^j_n)\|^2_{L^2} = M, \quad \|\tilde{v}^l_n\|^2_{L^2} = (\lambda^l_n)^2 \|v^l_n\|^2_{L^2} = M. 
		\]
		By definition of $d_M$, we have
		\begin{align}
		E(\tilde{V}^j_n) \geq d_M, \quad E(\tilde{v}^l_n) \geq d_M. \label{energy new variables}
		\end{align}
		Moreover, a direct computation shows that
		\begin{align*}
		E(\tilde{V}^j_n) = \frac{\lambda^2_j}{2} \|V^j(\cdot- x^j_n)\|^2_{\dot{H}^1_c} - \frac{\lambda_j^{\alpha+2}}{\alpha+2} \|V^j\|^{\alpha+2}_{L^{\alpha+2}}.
		\end{align*}
		So,
		\begin{align}
		E(V^j(\cdot-x^j_n)) = \frac{E(\tilde{V}^j_n)}{\lambda_j^2} + \frac{\lambda_j^\alpha-1}{\alpha+2} \|V^j\|^{\alpha+2}_{L^{\alpha+2}}. \label{energy main terms}
		\end{align}
		Similarly,
		\begin{align}
		E(v^l_n) = \frac{E(\tilde{v}^l_n)}{(\lambda^l_n)^2} + \frac{(\lambda^l_n)^\alpha-1}{\alpha+2} \|v^l_n\|^{\alpha+2}_{L^{\alpha+2}} \geq \frac{E(\tilde{v}^l_n)}{(\lambda^l_n)^2}. \label{energy remainder}
		\end{align}
		Inserting $(\ref{energy main terms})$, $(\ref{energy remainder})$ to $(\ref{energy expansion})$ and using $(\ref{energy new variables})$, we obtain
		\begin{align*}
		E(v_n) &\geq \sum_{j=1}^l \left( \frac{E(\tilde{V}^j_n)}{\lambda_j^2} + \frac{\lambda_j^\alpha-1}{\alpha+2} \|V^j\|^{\alpha+2}_{L^{\alpha+2}} \right) + \frac{E(\tilde{v}^l_n)}{(\lambda^l_n)^2} + o_n(1) \\
		& \geq \sum_{j=1}^l \frac{\|V^j\|^2}{M} d_M + \left( \inf_{j\geq 1} \frac{\lambda^\alpha_j -1}{\alpha+2} \right) \sum_{j=1}^l \|V^j\|^{\alpha+2}_{L^{\alpha+2}} + \frac{\|v^l_n\|^2_{L^2}}{M} d_M + o_n(1) \\
		& = \frac{d_M}{M} \left( \|v_n\|^2_{L^2} + o_n(1) \right) + \left( \inf_{j\geq 1} \frac{\lambda^\alpha_j -1}{\alpha+2} \right) \left( \|v_n\|^{\alpha+2}_{L^{\alpha+2}} - \|v^l_n\|^{\alpha+2}_{L^{\alpha+2}} + o_n(1)\right) + o_n(1).
		\end{align*}
		Since $\sum_{j=1}^\infty \|V^j\|^2_{L^2}$ is convergent, there exists $j_0 \geq 1$ such that
		\[
		\|V^{j_0}\|^2_{L^2} = \sup_{j\geq 1} \|V^j\|^2_{L^2}. 
		\]
		We thus have
		\begin{align}
		\inf_{j\geq 1} \lambda^\alpha_j= \inf_{j\geq 1} \left(\frac{\sqrt{M}}{\|V^j\|_{L^2} } \right)^\alpha = \left(\frac{\sqrt{M}}{\|V^{j_0}\|_{L^2} }\right)^\alpha. \label{infimum}
		\end{align}
		On the other hand, by the definition of energy,
		\[
		\frac{\|v_n\|^{\alpha+2}_{L^{\alpha+2}}}{\alpha+2} \geq -E(v_n) \nearrow -d_M.
		\]
		Using $(\ref{upper bound dM})$, we get
		\begin{align}
		\frac{\|v_n\|^{\alpha+2}_{L^{\alpha+2}}}{\alpha+2} \geq -E(v_n) \geq \frac{C_1}{2}, \label{lower bound}
		\end{align}
		for $n$ sufficiently large. By $(\ref{infimum})$ and $(\ref{lower bound})$, we get 
		\[
		E(v_n) \geq d_M + o_n(1) + \left[\left(\frac{\sqrt{M}}{\|V^{j_0}\|_{L^2} }\right)^\alpha -1 \right] \left(\frac{C_1}{2} - \frac{\|v^l_n\|^{\alpha+2}_{L^{\alpha+2}}}{\alpha+2} \right) + o_n(1).
		\]
		Taking the limits $n \rightarrow \infty$ and $l\rightarrow \infty$ and using $(\ref{profile error})$, we obtain
		\[
		d_M \geq d_M + \left[\left(\frac{\sqrt{M}}{\|V^{j_0}\|_{L^2} }\right)^\alpha -1 \right] \frac{C_1}{2}.
		\] 
		Therefore, $\|V^{j_0}\|^2_{L^2} \geq M$. The almost orthogonality $(\ref{L2 expansion})$ and the fact $\|v_n\|^2_{L^2}=M$ then imply that there is only one term $V^{j_0} \ne 0$ in the profile decomposition $(\ref{profile decomposition})$ and $\|V^{j_0}\|^2_{L^2} =M$. 
		
		The identity $(\ref{profile decomposition})$ implies for every $l \geq j_0$,
		\[
		v_n(x) = V^{j_0} (x-x^{j_0}_n) + v^l_n(x).
		\]
		Fix $l=j_0$. Using
		\[
		\|v_n\|^2_{L^2} = \|V^{j_0}\|^2_{L^2} + \|v^{j_0}_n\|^2_{L^2} + o_n(1),
		\]
		as $n\rightarrow \infty$, and $\|v_n\|^2_{L^2} = \|V^{j_0}\|^2_{L^2} = M$, we get (up to a subsequence)
		\[
		\lim_{n\rightarrow \infty} \|v^{j_0}_n\|_{L^2} =0.
		\]
		This shows that the sequence $(v^{j_0}_n)_{n\geq 1}$ converges to zero weakly in $H^1$ and strongly in $L^2$. The boundedness of $(v^{j_0}_n)_{n\geq 1}$ in $H^1$ along with the strong convergence in $L^2$ to zero yield that
		\[
		\lim_{n\rightarrow \infty} \|v^{j_0}_n\|^{\alpha+2}_{L^{\alpha+2}} =0.
		\]
		The lower semi-continuity of Hardy's functional then gives
		\[
		0 \leq \liminf_{n\rightarrow \infty} E(v^{j_0}_n),
		\]
		thus
		\begin{align*}
		\liminf_{n\rightarrow \infty} E(V^{j_0}(\cdot-x^{j_0}_n)) &\leq \liminf_{n\rightarrow \infty} E(V^{j_0}(\cdot-x^{j_0}_n)) + \liminf_{n\rightarrow \infty} E(v^{j_0}_n) \\
		&\leq \liminf_{n\rightarrow \infty} \left(E(V^{j_0}(\cdot-x^{j_0}_n)) + E(v^{j_0}_n) \right) \\
		&=\liminf_{n\rightarrow \infty} E(v_n) = d_M.
		\end{align*}
		On the other hand, since $\|V^{j_0}(\cdot-x^{j_0}_n)\|^2_{L^2} = \|V^{j_0}\|^2_{L^2} = M$ for all $n\geq 1$, we have $E(V^{j_0}(\cdot-x^{j_0}_n)) \geq d_M$ for all $n\geq 1$. Therefore,
		\[
		\liminf_{n\rightarrow \infty} E(V^{j_0}(\cdot-x^{j_0}_n)) = d_M,
		\]
or equivalently,
		\begin{align}
		\frac{1}{2} \|\nabla V^{j_0}\|^2_{L^2} - \frac{1}{\alpha+2} \|V^{j_0}\|^{\alpha+2}_{L^{\alpha+2}} -\frac{c}{2} \limsup_{n\rightarrow \infty} \int |x|^{-2} |V^{j_0}(x-x^{j_0}_n)|^2 dx = d_M. \label{liminf equality}
		\end{align}
		We next prove that the sequence $(x^{j_0}_n)_{n\geq 1}$ is bounded. Indeed, if it is not true, then up to a subsequence, we assume that $|x^{j_0}_n| \rightarrow \infty$ as $n\rightarrow \infty$. Without loss of generality, we assume that $V^{j_0}$ is continuous and compactly suported. We have
		\[
		\int |x|^{-2} |V^{j_0}(x-x^{j_0}_n)|^2 dx = \int_{\text{supp}(V^{j_0})} |x+x^{j_0}_n|^{-2} |V^{j_0}(x)|^2 dx.
		\]
		Since $|x^{j_0}_n| \rightarrow \infty$ as $n\rightarrow \infty$, we see that $|x+ x^{j_0}_n| \geq |x^{j_0}_n| -|x| \rightarrow \infty$ as $n\rightarrow \infty$ for all $x \in \text{supp}(V^{j_0})$. This shows that
		\[
		\int |x|^{-2} |V^{j_0}(x-x^{j_0}_n)|^2 dx \rightarrow 0,
		\]
		as $n\rightarrow \infty$. This yields
		\[
		\frac{1}{2} \|\nabla V^{j_0}\|^2_{L^2} - \frac{1}{\alpha+2} \|V^{j_0}\|^{\alpha+2}_{L^{\alpha+2}} = d_M.
		\]
		By the definition of $E(V^{j_0})$, we obtain
		\[
		E(V^{j_0}) + \frac{c}{2} \int |x|^{-2} |V^{j_0}(x)|^2 dx = d_M.
		\]
		Since $0<c<\lambda(d)$, we get $E(V^{j_0}) <d_M$, which is absurd (note that $E(V^{j_0}) \geq d_M$ due to $\|V^{j_0}\|^2_{L^2}=M$). Therefore, the sequence $(x^{j_0}_n)_{n\geq 1}$ is bounded and up to a subsequence, we assume that $x^{j_0}_n \rightarrow x^{j_0}$ as $n\rightarrow \infty$.  
		
		We now write
		\[
		v_n(x) = \tilde{V}^{j_0}(x) + \tilde{v}^{j_0}_n(x),
		\]
		where $\tilde{V}^{j_0}(x)= V^{j_0}(x-x^{j_0})$ and $\tilde{v}^{j_0}_n(x) := V^{j_0}(x-x^{j_0}_n) - V^{j_0}(x-x^{j_0}) + v^{j_0}_n(x)$. Using the fact $\|v_n\|^2_{L^2} = \|V^{j_0}\|^2_{L^2} = M$, it is easy to see that
		\[
		\tilde{v}^{j_0}_n \rightharpoonup 0 \text{ weakly in } H^1 \text{ and } \lim_{n\rightarrow \infty} \|\tilde{v}^{j_0}_n\|_{L^2} =0.
		\]
		The first observation on $\tilde{v}^{j_0}_n$ allows us to write
		\[
		E(v_n) = E(\tilde{V}^{j_0}) + E(\tilde{v}^{j_0}_n) + o_n(1).
		\]
		Again, the lower semi-continuity of Hardy's functional and the fact $\lim_{n\rightarrow \infty} \|\tilde{v}^{j_0}_n\|^{\alpha+2}_{L^{\alpha+2}} =0$, we get that $\liminf_{n\rightarrow \infty} E(\tilde{v}^{j_0}_n) \geq 0$. Hence, using the fact that $\|\tilde{V}^{j_0}\|^2_{L^2} = M$, we infer that
		\begin{align*}
		d_M =\liminf_{n\rightarrow \infty} E(v_n) &\geq \liminf_{n\rightarrow \infty} \left( E(\tilde{V}^{j_0}) + E(\tilde{v}^{j_0}_n) \right) \\
		& \geq E(\tilde{V}^{j_0}) + \liminf_{n\rightarrow \infty} E(\tilde{v}^{j_0}_n) \\
		&\geq E(\tilde{V}^{j_0}) \geq d_M.
		\end{align*}
		Therefore, $E(\tilde{V}^{j_0}) = d_M$ which completes the proof of Item (1).
		
		(2) We now consider the case $c<0$. By the same argument (with $C_{\text{GN}}(c,\text{rad})$ in place of $C_{\text{GN}}(c)$), the variational problem $(\ref{variational problem 2})$ is well-defined and there exists $C_2>0$ such that $(\ref{upper bound dM radial})$ holds. It remains to show that there exists $v\in H^1$ radial such that $E(v) = d_{M,\text{rad}}$. Let $(v_n)_{n\geq 1}$ be a minimizing sequence of $d_{M,\text{rad}}$, that is $v_n \in H^1_{\text{rad}}$, $\|v_n\|^2_{L^2} = M$ for all $n\geq 1$ and $\lim_{n\rightarrow \infty} E(v_n) = d_{M,\text{rad}}$. Arguing as in the first case, we see that $(v_n)_{n\geq 1}$ is a bounded radial sequence in $H^1$. Thanks to the fact that 
		\begin{align}
		H^1_{\text{rad}} \hookrightarrow L^p \text{ compactly}, \label{compact embedding}
		\end{align}
		for any $2<p< \frac{2d}{d-2}$, there exists $V \in H^1_{\text{rad}}$ (see Appendix) such that
		\[
		v_n \rightharpoonup V \text{ weakly in } H^1 \text{ and } v_n \rightarrow V \text{ strongly in } L^{\alpha+2}.
		\]
		We write
		\[
		v_n(x) = V(x) + r_n(x), 
		\]
		with $r_n \rightharpoonup 0$ weakly in $H^1$ (note that $r_n$ can be taken radially symmetric). We have the following expansions
		\begin{align}
		\|v_n\|^2_{L^2} &= \|V\|^2_{L^2} + \|r_n\|^2_{L^2} + o_n(1), \label{mass expansion radial} \\
		E(v_n) &= E(V) + E(r_n) + o_n(1), \label{energy expansion radial}
		\end{align}
		as $n\rightarrow \infty$. Denote $\tilde{V} = \lambda V$ and $\tilde{r}_n = \lambda_n r_n$, where
		\[
		\lambda:= \frac{\sqrt{M}}{\|V\|_{L^2}} \geq 1, \quad \lambda_n := \frac{\sqrt{M}}{\|r_n\|_{L^2}} \geq 1.
		\]
		It is obvious that $\|\tilde{V}\|^2_{L^2} = \|\tilde{r}_n\|^2_{L^2} = M$, hence 
		\[
		E(\tilde{V}) \geq d_{M,\text{rad}}, \quad E(\tilde{r}_n) \geq d_{M,\text{rad}}.
		\]
		We also have
		\[
		E(\tilde{V}) = \frac{\lambda^2}{2} \|V\|^2_{\dot{H}^1_c} - \frac{\lambda^{\alpha+2}}{\alpha+2} \|V\|^{\alpha+2}_{L^{\alpha+2}}.
		\]
		So,
		\[
		E(V) = \frac{E(\tilde{V})}{\lambda^2} + \frac{\lambda^\alpha-1}{\alpha+2} \|V\|^{\alpha+2}_{L^{\alpha+2}}.
		\]
		Similarly,
		\[
		E(r_n) = \frac{E(\tilde{r}_n)}{\lambda_n^2} + \frac{\lambda_n^\alpha-1}{\alpha+2} \|r_n\|^{\alpha+2}_{L^{\alpha+2}} \geq \frac{E(\tilde{r}_n)}{\lambda_n^2}.
		\]
		Pluging above estimates to $(\ref{energy expansion radial})$, we have
		\begin{align*}
		E(v_n) &\geq \frac{E(\tilde{V})}{\lambda^2} +\frac{\lambda^\alpha-1}{\alpha+2} \|V\|^{\alpha+2}_{L^{\alpha+2}} + \frac{E(\tilde{r}_n)}{\alpha_n^2} + o_n(1) \\
		&= \frac{\|V\|^2_{L^2} d_{M,\text{rad}}}{M} + \frac{\lambda^\alpha-1}{\alpha+2} \|V\|^{\alpha+2}_{L^{\alpha+2}} + \frac{\|r_n\|^2_{L^2} d_{M,\text{rad}}}{M} + o_n(1) \\
		&= \frac{d_{M,\text{rad}}}{M} \left(\|V\|^2_{L^2} + \|r_n\|^2_{L^2} \right) +\frac{\lambda^\alpha-1}{\alpha+2} \left( \|v_n\|^{\alpha+2}_{L^{\alpha+2}} - \|r_n\|^{\alpha+2}_{L^{\alpha+2}} \right) + o_n(1).
		\end{align*}
		Since 
		\[
		\frac{\|v_n\|^{\alpha+2}_{L^{\alpha+2}}}{\alpha+2} \geq -E(v_n) \nearrow -d_{M,\text{rad}}.
		\]
		Using the upper bound $(\ref{upper bound dM radial})$, we see that
		\[
		\frac{\|v_n\|^{\alpha+2}_{L^{\alpha+2}}}{\alpha+2} \geq -E(v_n) \geq \frac{C_2}{2},
		\]
		for $n$ sufficiently large. Taking $n\rightarrow \infty$, this combined with the fact $\lim_{n\rightarrow \infty} \|r_n\|^{\alpha+2}_{L^{\alpha+2}} =0$ yield
		\[
		d_{M,\text{rad}} \geq d_{M,\text{rad}} + \left[ \left(\frac{\sqrt{M}}{\|V\|_{L^2}} \right)^\alpha-1 \right]\frac{C_2}{2}.
		\]
		We thus obtain $\|V\|^2_{L^2} \geq M$. Since $\|v_n\|^2_{L^2} = M$, we have from $(\ref{mass expansion radial})$ that $\|V\|^2_{L^2} = \|v_n\|^2_{L^2} = M$. In particular, we have $\lim_{n\rightarrow\infty} \|r_n\|^2_{L^2}=0$ and $E(V) \geq d_{M,\text{rad}}$. Since $r_n\rightharpoonup 0$ weakly in $H^1$ and strongly in $L^2$. The lower semi-continuity of Hardy's functional implies
		\[
		\liminf_{n\rightarrow \infty} E(r_n) \geq 0.
		\]
		Therefore,
		\begin{align*}
		d_{M,\text{rad}} = \liminf_{n\rightarrow \infty} E(v_n) \geq \liminf_{n\rightarrow \infty} \left(E(V) + E(r_n) \right) &\geq E(V) + \liminf_{n\rightarrow \infty} E(r_n) \\
		&\geq E(V) \geq d_{M,\text{rad}}.
		\end{align*}
		We thus obtain $E(V) = d_{M,\text{rad}}$, which implies that the variational problem $(\ref{variational problem 2})$ is attained. The proof is complete.
	\end{proof}
	
	\begin{remark} \label{remark variational problems}
		(1) The proof Proposition $\ref{proposition variational problems}$ still holds 
		
		$\bullet$ in the case $0<c<\lambda(d)$ if in place of 
			\[
			\|v_n\|^2_{L^2} =M \text{ for all } n\geq 1 \text{ and } \lim_{n \rightarrow \infty} E(v_n) =d_M,
			\]
			we assume that
			\[
			\lim_{n\rightarrow \infty} \|v_n\|^2_{L^2} = M \text{ and } \lim_{n\rightarrow \infty} E(v_n) =d_M.
			\]
			
		$\bullet$ in the case $c<0$ if in place of 
			\[
			\|v_n\|^2_{L^2} =M \text{ for all } n\geq 1 \text{ and } \lim_{n \rightarrow \infty} E(v_n) =d_{M,\text{rad}},
			\]
			we assume that
			\[
			\lim_{n\rightarrow \infty} \|v_n\|^2_{L^2} = M \text{ and } \lim_{n\rightarrow \infty} E(v_n) =d_{M,\text{rad}}.
			\]
		(2) It follows from the proof of Proposition $\ref{proposition variational problems}$ that
		
			$\bullet$ in the case $0<c<\lambda(d)$,		
			\[
			E(v_n) = E(\tilde{V}^{j_0}) + E(\tilde{v}^{j_0}_n) + o_n(1),
			\]
			as $n\rightarrow \infty$, and
			\begin{align*}
			\lim_{n\rightarrow \infty} E(v_n) =d_M, \quad \lim_{n\rightarrow \infty} \|\tilde{v}^{j_0}_n\|^{\alpha+2}_{L^{\alpha+2}} =0, \quad E(\tilde{V}^{j_0}) = d_M.
			\end{align*}
			We thus have that up to a subsequence,
			\[
			\lim_{n\rightarrow \infty} \|\tilde{v}^{j_0}_n\|^2_{\dot{H}^1_c} =0.
			\]
			Since $\|\tilde{v}^{j_0}_n\|^2_{\dot{H}^1_c}  \sim \|\tilde{v}^{j_0}_n\|^2_{\dot{H}^1}$, we conclude that
			\[
			\lim_{n\rightarrow \infty} \|\nabla \tilde{v}^{j_0}_n\|_{L^2} =0.
			\] 
			So,
			\[
			\lim_{n\rightarrow \infty} \|\nabla v_n\|_{L^2} = \|\nabla \tilde{V}^{j_0}\|_{L^2},
			\]
			which along with $\lim_{n\rightarrow \infty} \|v_n\|_{L^2} = \|\tilde{V}^{j_0}\|_{L^2}$ yield
			\[
			v_n \rightarrow \tilde{V}^{j_0} \text{ strongly in } H^1,
			\]
			as $n\rightarrow \infty$. 
			
			$\bullet$ in the case $c<0$,
			\[
			E(v_n) = E(V) + E(r_n) +o_n(1),
			\]
			as $n\rightarrow \infty$, and 
			\[
			\lim_{n\rightarrow \infty} E(v_n) = d_{M,\text{rad}}, \quad \lim_{n\rightarrow \infty} \|r_n\|^{\alpha+2}_{L^{\alpha+2}} =0, \quad E(V) = d_{M,\text{rad}}.
			\]
			We thus have
			\[
			\lim_{n\rightarrow \infty} \|r_n\|^2_{\dot{H}^1_c} =0,
			\]
			which together with $\|r_n\|^2_{\dot{H}^1_c} \sim \|r_n\|^2_{\dot{H}^1}$ yield
			\[
			\lim_{n\rightarrow \infty} \|\nabla r_n\|_{L^2}=0.
			\]
			This implies
			\[
			\lim_{n\rightarrow \infty} \|\nabla v_n\|_{L^2} = \|\nabla V\|_{L^2},
			\]
			which together with $\lim_{n\rightarrow \infty} \|v_n\|_{L^2} = \|V\|_{L^2}$ imply
			\[
			v_n \rightarrow V \text{ strongly in } H^1,
			\]
			as $n \rightarrow \infty$.
	
	\end{remark}
	
	We are now able to prove the orbital stability given in Theorem $\ref{theorem stability}$.
	
	\noindent {\it Proof of Theorem $\ref{theorem stability}$.}
	We only consider the case $0<c<\lambda(d)$, the case $c<0$ is completely similar. We argue by contradiction. Assume that there exist sequences $(u_{0,n})_{n\geq 1} \subset H^1$, $(t_n)_{n\geq 1} \subset \R^+$ and $\eps_0>0$ such that for all $n\geq 1$,
	\begin{align}
	\inf_{v \in S_M} \|u_{0,n} - v\|_{H^1} <\frac{1}{n}, \label{stability proof 1}
	\end{align}
	and
	\begin{align}
	\inf_{v \in S_M} \|u_n(t_n) - v\|_{H^1} \geq \eps_0. \label{stability proof 2}
	\end{align}
	Here $u_n(t)$ is the solution to $(\ref{focusing NLS inverse square})$ with initial data $u_{0,n}$. We next claim that there exists $v \in S_M$ such that 
	\[
	\lim_{n\rightarrow \infty} \|u_{0,n} -v\|_{H^1} =0.
	\]
	Indeed, we have from $(\ref{stability proof 1})$ that for each $n\geq 1$, there exists $v_n \in S_M$ such that 
	\begin{align}
	\|u_{0,n} - v_n\|_{H^1} <\frac{2}{n}. \label{claim proof 1}
	\end{align}
	We thus obtain a sequence $(v_n)_{n\geq 1} \subset S_M$, and we get from the proof of Proposition $\ref{proposition variational problems}$ that there exists $v\in S_M$ such that
	\begin{align}
	\lim_{n\rightarrow \infty} \|v_n - v\|_{H^1} =0. \label{claim proof 2}
	\end{align}
	The claim follows immediately from $(\ref{claim proof 1})$ and $(\ref{claim proof 2})$. We thus get
	\[
	\lim_{n\rightarrow \infty} \|u_{0,n}\|^2_{L^2} = \|v\|_{L^2}^2 = M, \quad \lim_{n\rightarrow \infty} E(u_{0,n}) = E(v) = d_M.
	\]
	The conservation of mass and energy then imply
	\[
	\lim_{n\rightarrow \infty} \|u_n(t_n)\|^2_{L^2} = M, \quad \lim_{n\rightarrow \infty} E(u_n(t_n)) = d_M.
	\]
	Again, from the proof of Proposition $\ref{proposition variational problems}$ and Remark $\ref{remark variational problems}$, there exists $\tilde{v} \in S_M$ such that $(u_n(t_n))_{n\geq 1}$ converges strongly to $\tilde{v}$ in $H^1$. This contradicts $(\ref{stability proof 2})$ and the proof is complete.
	\defendproof

	\section{Strong instability of standing waves} \label{section instability}
	\setcounter{equation}{0}
	In this section, we study the stability of standing waves for the nonlinear Schr\"odinger equation with inverse-square potential in the $L^2$-critical case, i.e. $\alpha=\frac{4}{d}$ in $(\ref{focusing NLS inverse square})$. Let us start by defining properly the notion of ground states related to the $L^2$-critical $(\ref{focusing NLS inverse square})$.
	\begin{definition}[Ground states]
		\begin{itemize}
			\item In the case $0<c<\lambda(d)$, we call {\bf ground states} the maximizers of $J^{4/d}_c$ (see $(\ref{weinstein functional})$) which are positive radial solutions of
			\begin{align}
			-\Delta Q - c|x|^{-2} Q + Q = |Q|^{\frac{4}{d}} Q. \label{ground state positive c}
			\end{align} 
			The set of ground states is denoted by $\mathcal{G}$. 
			\item In the case $c<0$, we call {\bf radial ground states} the maximizers of $J^{4/d}_c$ which are positive radial solutions of
			\begin{align}
			-\Delta Q_{\text{rad}} - c|x|^{-2} Q_{\text{rad}} + Q_{\text{rad}} = |Q_{\text{rad}}|^{\frac{4}{d}} Q_{\text{rad}}. \label{ground state negative c}
			\end{align}
			The set of radial ground states is denoted by $\mathcal{G}_{\text{rad}}$.
		\end{itemize}
	\end{definition}
	
	\begin{remark} \label{remark instability}
		\begin{itemize}
			\item This notion of ground states in the case $0<c<\lambda(d)$ was first introduced in \cite{CG} due to the fact that the uniqueness (up to symmetries) of positive radial solutions of $(\ref{ground state positive c})$ and $(\ref{ground state negative c})$ are not yet known.
			\item It follows from Theorem $\ref{theorem sharp gagliardo-nirenberg}$ and its proof that there exists $M_{\text{gs}}>0$ such that $\|Q\|_{L^2} = M_{\text{gs}}$ for all $Q \in \mathcal{G}$. The constant $M_{\text{gs}}$ is called {\bf minimal mass}. Similarly, there exists $M_{\text{gs,rad}}>0$ such that $\|Q_{\text{rad}}\|_{L^2} = M_{\text{gs, rad}}$ for all $Q_{\text{rad}} \in \mathcal{G}_{\text{rad}}$. The constant $M_{\text{gs,rad}}$ is called {\bf radial minimal mass}.
			\item Thanks to the pseudo-conformal invariance, it was shown in \cite[Remark 3, p.118]{CG} that any solution $Q$ of $(\ref{ground state positive c})$ satisfies $\|Q\|_{L^2} \geq M_{\text{gs}}$. Similarly, any radial solution $Q_{\text{rad}}$ of $(\ref{ground state negative c})$ satisfies $\|Q_{\text{rad}}\|_{L^2} \geq M_{\text{gs,rad}}$.
			\item If $Q \in \mathcal{G}$, then $Q_\omega:= (\sqrt{\omega})^{d/2} Q(\sqrt{\omega} x)$ is also a maximizer of $J^{4/d}_c$ satisfying $\|Q_\omega\|_{L^2} = \|Q\|_{L^2} = M_{\text{gs}}$ and 
			\[
			-\Delta Q_\omega - c|x|^{-2} Q_\omega + \omega Q_\omega = |Q_\omega|^{\frac{4}{d}} Q_\omega.
			\]
			Similarly, if $Q_{\text{rad}} \in \mathcal{G}_{\text{rad}}$, then $Q_{\omega,\text{rad}} := (\sqrt{\omega})^{d/2} Q_{\text{rad}}(\sqrt{\omega} x)$ is aslo a maximizer of $J^{4/d}_c$ satisfying $\|Q_{\omega,\text{rad}}\|_{L^2} = \|Q_{\text{rad}}\|_{L^2} = M_{\text{gs,rad}}$ and
			\[
			-\Delta Q_{\omega,\text{rad}} - c|x|^{-2} Q_{\omega,\text{rad}} + \omega Q_{\omega,\text{rad}} = |Q_{\omega,\text{rad}}|^{\frac{4}{d}} Q_{\omega,\text{rad}}.
			\]
			\item Let $Q$ be a $H^1$-solution to $(\ref{ground state positive c})$. Multiplying both sides of $(\ref{ground state positive c})$ with $\overline{Q}$, integrating over $\R^d$ and using the integration by parts, we obtain
			\begin{align}
			\|Q\|^2_{\dot{H}^1_c} + \|Q\|^2_{L^2} = \|Q\|^{\frac{4}{d}+2}_{L^{\frac{4}{d}+2}}. \label{pohozaev identity 1}
			\end{align}
			Similarly, multiplying both sides of $(\ref{ground state positive c})$ with $x\cdot \nabla \overline{Q}$, integrating over $\R^d$, the integration by parts gives
			\begin{align}
			\frac{d-2}{2} \|Q\|^2_{\dot{H}^1_c} + \frac{d}{2} \|Q\|^2_{L^2} = \frac{d^2}{2d+4} \|Q\|^{\frac{4}{d}+2}_{L^{\frac{4}{d}+2}}. \label{pohozaev identity 2}
			\end{align}
			Combining $(\ref{pohozaev identity 1})$ and $(\ref{pohozaev identity 2})$, we obtain the following Pohozaev's identities:
			\begin{align}
			\|Q\|^2_{\dot{H}^1_c}= \frac{d}{d+2}\|Q\|^{\frac{4}{d}+2}_{L^{\frac{4}{d}+2}} =\frac{d}{2}\|Q\|^2_{L^2}. \label{pohozaev identity}
			\end{align}
			In particular, we have
			\begin{align}
			E(Q) = \frac{1}{2} \|Q\|^2_{\dot{H}^1_c} -\frac{d}{2d+4} \|Q\|^{\frac{4}{d}+2}_{L^{\frac{4}{d}+2}}= 0. \label{energy ground state}
			\end{align}
			Similar identities as $(\ref{pohozaev identity 1}) - (\ref{energy ground state})$ still hold if $Q$ is replaced by $Q_{\text{rad}}$ which is a $H^1$-solution of $(\ref{ground state negative c})$.
		\end{itemize}
	\end{remark}
	%\begin{theorem} [Strong instability I] \label{theorem instability I}
		%\begin{enumerate}
			%\item Let $0<c<\lambda(d)$ and $Q \in \mathcal{G}$. Then the standing wave $e^{it} Q(x)$ is unstable in the following sense: there exists $(u_{0,n})_{n\geq 1} \subset H^1$ such that
			%\[
			%u_{0,n} \rightarrow Q \text{ strongly in } H^1,
			%\]
			%as $n\rightarrow \infty$ and the corresponding solution $u_n$ to the $L^2$-critical $(\ref{focusing NLS inverse square})$ with initial data $u_{0,n}$ blows up in finite time for any $n\geq 1$.
			%\item Let $c<0$ and $Q_{\emph{rad}} \in \mathcal{G}_{\emph{rad}}$.
			%Then the radial standing wave $e^{i t} Q_{\emph{rad}}(x)$ is unstable in the following sense: there exists $(u_{0,n})_{n\geq 1} \subset H^1$ such that
			%\[
			%u_{0,n} \rightarrow Q_{\emph{rad}} \text{ strongly in } H^1,
			%\]
			%as $n\rightarrow \infty$ and the corresponding solution $u_n$ to the $L^2$-critical $(\ref{focusing NLS inverse square})$ with initial data $u_{0,n}$ blows up in finite time for any $n\geq 1$.
		%\end{enumerate}
	%\end{theorem}
	%\begin{proof}
    \noindent \textit{Proof of Theorem $\ref{theorem instability I}$.}
		The proof is based on a standard argument (see e.g. \cite[Theorem 8.2.1]{Cazenave}). Let us assume at the moment that the functions $Q$ and $Q_{\text{rad}}$ belong to $L^2(|x|^2 dx)$. Note that unlike the classical nonlinear Schr\"odinger equation, we do not know whether solutions of $(\ref{ground state positive c})$ and $(\ref{ground state negative c})$ enjoy the exponential decay at infinity or not. In the case the exponential decay at infinity holds true, the above assumption is obviously satisfied. The above assumption together with the Pohozaev identity $(\ref{pohozaev identity})$ imply $Q$ and $Q_{\text{rad}}$ are both in $H^1 \cap L^2(|x|^2 dx)$. We next recall the standard virial identity related to the $L^2$-critical $(\ref{focusing NLS inverse square})$ (see \cite[Lemma 5.3]{Dinh} or \cite[Lemma 3, p.124]{CG}). 
		\begin{lemma}[Virial identity] \label{lemma virial identity}
			Let $d\geq 3$, $c\ne 0$ and $c<\lambda(d)$. Let $u_0 \in H^1 \cap L^2(|x|^2 dx)$ and $u: J \times \R^d \rightarrow \C$ the corresponding solution to the $L^2$-critical $(\ref{focusing NLS inverse square})$. Then $u \in C(J, H^1 \cap L^2(|x|^2 dx))$ and
			\[
			\frac{d^2}{dt^2} \|x u(t)\|^2_{L^2}= 16 E(u_0),
			\]
			for any $t\in J$.
		\end{lemma}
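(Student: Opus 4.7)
The plan is to follow the classical Glassey-type virial computation, adapted to incorporate the inverse-square potential, preceded by a persistence-of-weight argument and justified via a truncation near infinity (and near the origin, where the potential is singular). First I would verify that $u(t)$ stays in $L^2(|x|^2 dx)$ on $J$: combining the $H^1$ local theory of Theorem \ref{theorem lwp} with a fixed-point argument for the operator $(x + 2it\nabla)u$, or alternatively approximating $u_0$ by Schwartz data, solving, and passing to the limit using the a priori bound that the virial identity itself will provide, one concludes $u \in C(J, H^1 \cap L^2(|x|^2 dx))$.

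The formal computation proceeds in two stages. For smooth enough solutions, using $i\partial_t u = -\Delta u - c|x|^{-2}u - |u|^\alpha u$, the potential and nonlinear terms yield real integrands after being multiplied by $|x|^2\bar u$, so only the Laplacian contributes:
\[
\frac{d}{dt}\|xu(t)\|_{L^2}^2 = 4\,\mathrm{Im}\int \bar u\,(x\cdot\nabla u)\,dx.
\]
Differentiating again and using the equation produces, after integration by parts, the general virial formula
\[
\frac{d^2}{dt^2}\|xu(t)\|_{L^2}^2 = 8\|\nabla u\|_{L^2}^2 - 4\int (x\cdot\nabla V)\,|u|^2\,dx - \frac{4d\alpha}{\alpha+2}\|u\|_{L^{\alpha+2}}^{\alpha+2},
\]
where $V(x)=-c|x|^{-2}$. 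The key homogeneity computation $x\cdot\nabla V = 2c|x|^{-2}$ gives exactly $-8c\int |x|^{-2}|u|^2\,dx$ for the middle term, and in the $L^2$-critical case $\alpha=4/d$ the identity $\frac{4d\alpha}{\alpha+2}=\frac{16}{\alpha+2}$ combines the three terms into $16 E(u(t))$. Conservation of energy from Theorem \ref{theorem lwp} then converts this into $16E(u_0)$.

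To make the computation rigorous at the regularity $H^1\cap L^2(|x|^2dx)$, I would replace the weight $|x|^2$ by a smooth cutoff $\phi_R(x)=R^2\phi(x/R)$, where $\phi$ coincides with $|x|^2$ on a large ball and is bounded outside, and carry out the two differentiations for $\int \phi_R |u|^2\,dx$, which is fully justified since $u\in C(J,H^1)$. Passing to the limit $R\to\infty$ uses the assumed integrability $u(t)\in L^2(|x|^2dx)$ together with dominated convergence; the boundary terms arising from derivatives of $\phi_R$ vanish because $|\nabla\phi_R|$ and $|\Delta\phi_R|$ are supported on $|x|\sim R$ and controlled by $R$ and $1$ respectively.

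The main obstacle is the interaction between the singular potential and the integration-by-parts manipulations: the integrals $\int|x|^{-2}|u|^2dx$ and $\int|x|^{-4}|x|^2|u|^2dx$ appearing along the way must be controlled, which is handled by Hardy's inequality \eqref{hardy inequality} and the equivalence \eqref{equivalent gradient norm} so that every term estimated is bounded by $\|u\|_{H^1}$ and $\|xu\|_{L^2}$. Once the truncated computation is validated and the limit passage is performed, the identity $\frac{d^2}{dt^2}\|xu(t)\|_{L^2}^2 = 16E(u_0)$ follows on all of $J$.
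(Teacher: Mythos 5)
The paper does not prove this lemma at all: it quotes it as known, citing \cite[Lemma 5.3]{Dinh} and \cite[Lemma 3, p.~124]{CG}, and your argument --- the Glassey computation using $x\cdot\nabla(|x|^{-2})=-2|x|^{-2}$, regularized by a cutoff $\varphi_R$ with the singular terms controlled by Hardy's inequality \eqref{hardy inequality} --- is precisely the standard proof carried out in those references, and its details (in particular the algebra turning $8\|\nabla u\|_{L^2}^2-8c\int|x|^{-2}|u|^2dx-\frac{16}{\alpha+2}\|u\|_{L^{\alpha+2}}^{\alpha+2}$ into $16E(u)$ when $\alpha=4/d$) check out. The one step I would rephrase is the persistence of the weight: a fixed-point argument for $(x+2it\nabla)u$ is awkward here because that operator no longer commutes with the equation (its commutator with $c|x|^{-2}$ is the rather singular $-4itc\,|x|^{-4}x$), and ``the a priori bound that the virial identity itself will provide'' is circular as stated; the clean route is the first-derivative identity for the truncated quantity, $\frac{d}{dt}\int\varphi_R|u|^2\,dx=2\,\mathrm{Im}\int\nabla\varphi_R\cdot\nabla u\,\bar{u}\,dx$, in which the potential and the nonlinearity drop out entirely, so that $|\nabla\varphi_R|\lesssim\varphi_R^{1/2}$ and Gronwall yield a bound on $\int\varphi_R|u(t)|^2\,dx$ uniform in $R$, whence $u\in C(J,L^2(|x|^2dx))$ before any second differentiation is attempted.
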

		We now denote for $0<c<\lambda(d)$,
		\[
		u_{0,n}(x) := \mu_n Q(x),
		\]
		and for $c<0$,
		\[
		u_{0,n}(x):= \mu_n Q_{\text{rad}}(x),
		\]
		for any $n\geq 1$, where $\mu_n:= 1+1/n$. It is obvious that for $0<c<\lambda(d)$, $u_{0,n} \rightarrow Q$ strongly in $H^1$, and for $c<0$, $u_{0,n} \rightarrow Q_{\text{rad}}$ strongly in $H^1$. Let $u_n$ be the corresponding solution to the $L^2$-critical $(\ref{focusing NLS inverse square})$ with initial data $u_{0,n}$. We will show that $u_n$ blows up in finite time for any $n\geq 1$. We only consider $Q$, the one for $Q_{\text{rad}}$ is similar. To see this, we fix $n\geq 1$ and compute
		\begin{align*}
		E(u_{0,n}) &= \frac{\mu_n^2}{2} \|Q\|^2_{\dot{H}^1_c} -\frac{d\mu_n^{\frac{4}{d}+2}}{2d+4} \|Q\|^{\frac{4}{d}+2}_{L^{\frac{4}{d}+2}} \\
		&= \mu_n^2 E(Q) + \frac{d}{2d+4} \mu_n^2 \left(1- \mu_n^{\frac{4}{d}} \right) \|Q\|^{\frac{4}{d}+2}_{L^{\frac{4}{d}+2}}.
		\end{align*}
		Since $E(Q) =0$ (see $(\ref{energy ground state})$) and $\mu_n>1$, we have that $E(u_{0,n}) <0$. By Lemma $\ref{lemma virial identity}$, we have
		\[
		\frac{d^2}{dt^2} E(u_n(t)) = 16 E(u_{0,n}) <0,
		\]
		for any $t$ as long as the solution exists. The standard convexity argument (see e.g. \cite{Glassey}) shows that $u_n$ must blow up in finite time. 
				
		It remains to show that $Q$ and $Q_{\text{rad}}$ belongs to $L^2(|x|^2 dx)$. Let us first consider $Q$. Denote $u(t,x):= e^{it} Q(x)$ the standing waves. It is easy to see that $u$ is a global solution of the $L^2$-critical $(\ref{focusing NLS inverse square})$. For $0<T<+\infty$, we denote
		\[
		u_T(t,x) := \frac{e^{-i\frac{|x|^2}{4(T-t)}}}{(T-t)^{\frac{d}{2}}} u\left(\frac{1}{T-t}, \frac{x}{T-t} \right).
		\]
		Since the $L^2$-critical $(\ref{focusing NLS inverse square})$ is invariant under the pseudo-conformal transformation, we have from \cite[Lemma 1, p.117]{CG} that $u_T$ is a solution of the $L^2$-critical $(\ref{focusing NLS inverse square})$ which blows up at $T$ and satisfies $\|u_T(t)\|_{L^2} =\|u(1/(T-t))\|_{L^2}$. We thus construct a solution to the $L^2$-critical $(\ref{focusing NLS inverse square})$ which blows up in finite time $T$ and its initial data satisfies
		\[
		\|u_T(0)\|_{L^2} = \|Q\|_{L^2}= M_{\text{gs}}.
		\]
		We have from \cite[Theorem 3.2]{BD} that for a time sequence $t_n \nearrow T$ as $n\rightarrow \infty$, there exists $\tilde{Q} \in \mathcal{G}$, sequences of $\theta_n \in \R$, $\lambda_n>0$ and $x_n \in \R^d$ such that
		\begin{align}
		e^{it\theta_n} \lambda_n^{\frac{d}{2}} u_T(t_n, \lambda_n \cdot+ x_n) \rightarrow \tilde{Q} \text{ strongly in } H^1, \label{limiting profile}
		\end{align}
		as $n \rightarrow \infty$. We aslo have from \cite[p.127-128]{CG} that $u_T(t) \in L^2(|x|^2 dx)$ for any $t\in [0,T)$. In particular, $Q \in L^2(|x|^2 dx)$. This completes the proof for $Q$. 
		
		The case for $Q_{\text{rad}}$ is similar. The only different point is that instead of $(\ref{limiting profile})$, we have
		\[
		e^{it\vartheta_n} \rho_n^{\frac{d}{2}} u_T(t_n, \rho_n \cdot) \rightarrow \tilde{Q}_{\text{rad}} \text{ strongly in } H^1,
		\]
		as $n\rightarrow \infty$, for some $\tilde{Q}_{\text{rad}} \in \mathcal{G}_{\text{rad}}$. The rest of the proof remains the same as for $Q$. The proof is complete.		
	\defendproof

	\begin{theorem} [Strong instability II] \label{theorem instability II}
		Let $d\geq 3$ and $c \ne 0$ be such that $c \leq \lambda(d)$. Let $\omega>0$ and $Q$ be a radial $H^1$-solution to the elliptic equation
		\begin{align}
			-\Delta Q - c|x|^{-2} Q + \omega Q = |Q|^{\frac{4}{d}} Q. \label{mass-critical elliptic equation}
		\end{align}
		Then the standing wave $e^{i\omega t} Q(x)$ is unstable in the following sense: there exists $(u_{0,n})_{n\geq 1} \subset H^1$ such that
		\[
		u_{0,n} \rightarrow Q \text{ strongly in } H^1,
		\]
		as $n\rightarrow \infty$ and the corresponding solution $u_n$ to the $L^2$-critical $(\ref{focusing NLS inverse square})$ with initial data $u_{0,n}$ blows up in finite time for any $n\geq 1$.
	\end{theorem}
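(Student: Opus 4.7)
The plan is to mimic the argument of Theorem \ref{theorem instability I}. I would set $u_{0,n}:=\mu_n Q$ with $\mu_n:=1+1/n$, so that $u_{0,n}\to Q$ strongly in $H^1$ trivially, and then aim to show $E(u_{0,n})<0$. Provided that negativity holds and Lemma \ref{lemma virial identity} can be invoked, the corresponding solution $u_n$ to the $L^2$-critical $(\ref{focusing NLS inverse square})$ must blow up in finite time by Glassey's convexity argument, exactly as in the proof of Theorem \ref{theorem instability I}. Two ingredients are therefore needed: the algebraic identity $E(Q)=0$, which yields the negativity of $E(u_{0,n})$, and the moment bound $Q\in L^2(|x|^2 dx)$, which licenses the virial identity.

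The identity $E(Q)=0$ will follow from a pair of Pohozaev identities for the new elliptic equation $(\ref{mass-critical elliptic equation})$. Testing against $\overline Q$ gives $\|Q\|^2_{\dot H^1_c}+\omega\|Q\|^2_{L^2}=\|Q\|^{4/d+2}_{L^{4/d+2}}$, and testing against $x\cdot\nabla\overline Q$---where the Laplacian and inverse-square contributions assemble into the Hardy norm $\|Q\|^2_{\dot H^1_c}$, exactly as in Remark \ref{remark instability}---gives $\frac{d-2}{2}\|Q\|^2_{\dot H^1_c}+\frac{d\omega}{2}\|Q\|^2_{L^2}=\frac{d^2}{2d+4}\|Q\|^{4/d+2}_{L^{4/d+2}}$. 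Eliminating $\omega\|Q\|^2_{L^2}$ between these two reduces to $\|Q\|^2_{\dot H^1_c}=\frac{d}{d+2}\|Q\|^{4/d+2}_{L^{4/d+2}}$ independently of $\omega$, and hence $E(Q)=0$. A computation identical to the one in the proof of Theorem \ref{theorem instability I} then gives
\[
E(u_{0,n}) \;=\; \tfrac{d}{2d+4}\bigl(\mu_n^{2}-\mu_n^{4/d+2}\bigr)\|Q\|^{4/d+2}_{L^{4/d+2}} \;<\; 0,
\]
since $\mu_n>1$ and $4/d+2>2$.

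The main obstacle is verifying $Q\in L^2(|x|^2 dx)$ without the ground-state hypothesis: since $Q$ is not assumed to be a ground state, $\|Q\|_{L^2}$ may exceed $M_{\text{gs}}$ (or $M_{\text{gs,rad}}$), so the minimal-mass pseudo-conformal trick used in the proof of Theorem \ref{theorem instability I} is no longer available. My plan is to establish exponential decay of $Q$ at infinity by a direct super-solution/maximum-principle argument: the radial Sobolev embedding gives $|Q(x)|\lesssim |x|^{-(d-1)/2}$, so both $|Q(x)|^{4/d}$ and $|c||x|^{-2}$ lie below $\omega/2$ on an exterior region $\{|x|\geq R\}$ for $R$ large, and there $(\ref{mass-critical elliptic equation})$ reduces to the differential inequality $-\Delta Q+(\omega/2)Q\leq 0$; comparing $|Q|$ with the radial super-solution $Ce^{-\sqrt{\omega}|x|/2}$ via Kato's inequality applied to $|Q|$ yields $|Q(x)|\lesssim e^{-\delta|x|}$ for some $\delta>0$, and therefore $Q\in L^2(|x|^2 dx)$. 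Once this is in hand, Lemma \ref{lemma virial identity} gives $\frac{d^2}{dt^2}\|xu_n(t)\|^2_{L^2}=16E(u_{0,n})<0$, and the classical Glassey convexity argument forces $u_n$ to blow up in finite time. The technically delicate point in this scheme is the borderline case $c=\lambda(d)$ included in the hypothesis, where Hardy's inequality loses its strict gap and the exterior comparison must be carried out with additional care; as a fallback, I could bypass the moment bound entirely by running an Ogawa--Tsutsumi localised radial virial, built on a smooth radial cut-off equal to $|x|^2$ on $\{|x|\leq R\}$ and bounded at infinity, which delivers the same blow-up conclusion directly from the bare $H^1_{\text{rad}}$ hypothesis.
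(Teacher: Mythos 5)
Your proposal is essentially correct, and your computation of $E(u_{0,n})<0$ via the two Pohozaev identities for $(\ref{mass-critical elliptic equation})$ matches the paper's (the paper takes $u_{0,n}=\mu_n\lambda_n^{d/2}Q(\lambda_n\cdot)$ with $\mu_n>1$, $\lambda_n\to 1$, but with your choice $\lambda_n\equiv 1$ the algebra is identical). Where you diverge is in how blow-up is concluded. Your primary route tries to salvage the unlocalized virial identity of Lemma \ref{lemma virial identity} by proving $Q\in L^2(|x|^2\,dx)$ through exponential decay (radial Strauss decay, Kato's inequality, exterior comparison with $Ce^{-\delta|x|}$). The paper does not do this at all: it invokes Lemma \ref{lemma blow-up criteria}, the Ogawa--Tsutsumi-type localized radial virial criterion, which gives blow-up for any radial $H^1$ datum with $E(u_0)<0$ and needs no moment bound on $Q$. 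That is precisely your ``fallback,'' and it is the reason the theorem can be stated for arbitrary radial $H^1$ solutions of $(\ref{mass-critical elliptic equation})$ rather than only ground states. Your main route is plausible and would yield the extra information that $Q$ has finite variance, but it carries real technical overhead (justifying Kato's inequality and the exterior maximum principle for a complex-valued $H^1$ solution, and, as you note, the borderline $c=\lambda(d)$ --- where in fact all of the paper's supporting lemmas, including local well-posedness, are only stated for $c<\lambda(d)$, so that case should be regarded as outside the scope of any proof here). Since the localized virial lemma is already available in the paper, the fallback is not merely a backup but the intended and cleaner argument; I would promote it to the main line and drop the decay analysis.
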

	
	\begin{remark} \label{remark instability omega}
		\begin{itemize}
			\item The strong instability of Theorem $\ref{theorem instability II}$ allows radial solutions of $(\ref{ground state positive c})$ and $(\ref{ground state negative c})$ whose the $L^2$-norms may larger than $M_{\text{gs}}$ and $M_{\text{gs,rad}}$.
			\item As in Remark $\ref{remark instability}$, if $Q$ is a $H^1$-solution to $(\ref{mass-critical elliptic equation})$, then we have the following Pohozaev identities 
			\begin{align}
			\|Q\|^2_{\dot{H}^1_c}= \frac{d}{d+2}\|Q\|^{\frac{4}{d}+2}_{L^{\frac{4}{d}+2}} =\frac{d\omega}{2}\|Q\|^2_{L^2}. \label{pohozaev identity omega}
			\end{align}
			%In particular, we have
			%\begin{align}
			%E(Q) = \frac{1}{2} \|Q\|^2_{\dot{H}^1_c} -\frac{d}{2d+4} \|Q\|^{\frac{4}{d}+2}_{L^{\frac{4}{d}+2}}= 0. \label{energy ground state omega}
			%\end{align}
		\end{itemize}
	\end{remark}
	
	In order to show Theorem $\ref{theorem instability II}$, we recall the following virial estimates related to the $L^2$-critical $(\ref{focusing NLS inverse square})$. Let $\theta:[0,\infty) \rightarrow [0,\infty)$ be  a function satisfying
	\begin{align}
		\theta(r) = \left\{
		\begin{array}{cl}
			r^2 &\text{if } 0 \leq r \leq 1, \\
			\text{const.} &\text{if } r \geq 2, 
		\end{array}
		\right.
		\quad \text{and} \quad \theta''(r) \leq 2 \text{ for all } r\geq 0. \label{choice of theta} 
	\end{align}
	Note that the precise constant in $(\ref{choice of theta})$ is not important here. For $R>1$, we define the radial function
	\begin{align}
		\varphi_R(x) = \varphi_R(r) := R^2 \theta(r/R), \quad r=|x|. \label{define varphi_R}
	\end{align}
	We readily see that
	\[
	2-\varphi''_R(r) \geq 0, 2-\frac{\varphi'_R(r)}{r} \geq 0, \quad 2d -\Delta \varphi_R (x) \geq 0, \quad \forall r \geq 0, \quad \forall x \in \R^d.
	\]
	Let $u$ be a solution to $(\ref{focusing NLS inverse square})$. We define the localized virial potential associated to $u$ by
	\[
	V_{\varphi_R}(u(t)):= \int \varphi_R(x) |u(t,x)|^2 dx.
	\]
	\begin{lemma}[Radial virial estimate \cite{Dinh}] \label{lemma radial virial estimate}
		Let $d \geq 3$, $c \ne 0$ be such that $c<\lambda(d)$. Let $R>1$ and $\varphi_R$ be as in $(\ref{define varphi_R})$. Let $u: J\times \R^d \rightarrow \C$ be a radial solution to the $L^2$-critical $(\ref{focusing NLS inverse square})$. Then for any $\eps>0$ and any $t \in J$, it holds that
		\begin{align}
			\begin{aligned}
			\frac{d^2}{dt^2} V_{\varphi_R}(u(t)) &\leq 16 E(u_0) - 4 \int_{|x|>R} \left( \chi_{1,R} - \frac{\eps}{d+2} \chi_{2,R}^{\frac{d}{2}} \right) |\nabla u(t)|^2 dx \\
				&\mathrel{\phantom{\leq 16E(u_0) }} + O\left( R^{-2} + \eps R^{-2} + \eps^{-\frac{2}{d-2}} R^{-2} \right),
			\end{aligned}
			\label{radial virial estimate}
		\end{align}
		where
		\begin{align}
			\chi_{1,R} = 2-\varphi''_R, \quad \chi_{2,R} = 2d-\Delta \varphi_R. \label{define chi_1 chi_2}
		\end{align}
	\end{lemma}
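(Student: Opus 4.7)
The plan is to apply the standard localized virial machinery, tracking carefully the new contribution from the inverse-square potential and isolating the mass-critical quantity $16E(u_0)$ as the main term. First I would compute $\dot V_{\varphi_R}(u(t)) = 2\,\mathrm{Im}\int \bar u \,\nabla u \cdot \nabla \varphi_R\,dx$ from the equation via one integration by parts, and then differentiate once more, treating the Laplacian, the singular potential $c|x|^{-2}$, and the nonlinearity separately. A direct (but somewhat lengthy) computation, using that $u$ is radial so that $4\int \varphi_{R,jk}\partial_j u\,\overline{\partial_k u} = 4\int \varphi_R''|\nabla u|^2$, yields the localized virial identity
\begin{equation*}
\ddot V_{\varphi_R}(u(t)) = 4\int \varphi_R''\,|\nabla u|^2\,dx - \int \Delta^2\varphi_R\,|u|^2\,dx - \frac{2\alpha}{\alpha+2}\int \Delta\varphi_R\,|u|^{\alpha+2}\,dx - 4c\int \frac{\varphi_R'(|x|)}{|x|^3}|u|^2\,dx,
\end{equation*}
where the last term is the contribution of $-c|x|^{-2}$.

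Second, I would extract $16E(u_0)$ by computing the right-hand side with the values that $\varphi_R$ takes on $|x|\le R$, namely $\varphi_R(x)=|x|^2$, which gives $\varphi_R''=2$, $\Delta\varphi_R=2d$, $\Delta^2\varphi_R=0$, and $\varphi_R'/|x|=2$. Using the mass-critical identity $\frac{2\alpha}{\alpha+2}\cdot 2d = \frac{16}{\alpha+2}$ (which holds precisely because $\alpha=4/d$), these substitutions reconstruct $16E(u_0)$ via the conservation of energy. The remaining terms are concentrated on $\{|x|>R\}$, so the identity rewrites as
\begin{equation*}
\ddot V_{\varphi_R}(u) = 16E(u_0) - 4\!\int\! \chi_{1,R}|\nabla u|^2\,dx + \frac{2\alpha}{\alpha+2}\!\int\! \chi_{2,R}|u|^{\alpha+2}\,dx - \int \Delta^2\varphi_R\,|u|^2\,dx + 4c\!\int\!\left(2-\frac{\varphi_R'}{|x|}\right)\!\frac{|u|^2}{|x|^2}\,dx.
\end{equation*}

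The easy error terms are controlled by mass conservation $\|u(t)\|_{L^2}=\|u_0\|_{L^2}$: since $\|\Delta^2\varphi_R\|_{L^\infty}\lesssim R^{-2}$ by scaling and $(2-\varphi_R'/|x|)|x|^{-2}\lesssim R^{-2}$ on $\{|x|>R\}$, each contributes $O(R^{-2})$. The delicate term is the positive nonlinear remainder $\int\chi_{2,R}|u|^{\alpha+2}$. I would handle it via the radial Sobolev (Strauss) decay $|u(x)|\lesssim |x|^{-(d-1)/2}\|u\|_{L^2}^{1/2}\|\nabla u\|_{L^2(|x|>R)}^{1/2}$ for $|x|>R$. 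Writing $|u|^{4/d+2}=|u|^{4/d}\cdot|u|^2$, pulling out the pointwise Strauss bound on $|u|^{4/d}$, and then applying Young's inequality with parameter $\eps$ between the resulting power of $\|\nabla u\|_{L^2(|x|>R)}$ and the $R^{-2(d-1)/d}$ prefactor, one arrives at a splitting of the form
\begin{equation*}
\frac{2\alpha}{\alpha+2}\int \chi_{2,R}\,|u|^{\alpha+2}\,dx \le \frac{4\eps}{d+2}\int \chi_{2,R}^{d/2}|\nabla u|^2\,dx + C\bigl(\eps\, R^{-2} + \eps^{-2/(d-2)} R^{-2}\bigr),
\end{equation*}
where the exponent $d/2$ on $\chi_{2,R}$ arises because one must raise the pointwise weight so that its product with $|\nabla u|^2$ still integrates via the Strauss bookkeeping, and the exponent $-2/(d-2)$ on $\eps$ reflects the Young conjugate pair dictated by the dimensional balance in Strauss.

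The main obstacle I anticipate is the careful bookkeeping in this last step: the precise matching of the exponents $d/2$ and $-2/(d-2)$ requires balancing the radial decay rate $|x|^{-(d-1)/2}$ against the $L^2$ norm and gradient norm, and is the only point where the restriction $d\ge 3$ enters in a dimensionally essential way. A secondary difficulty is to justify the integrations by parts in the presence of the singular potential $c|x|^{-2}$ for $H^1$ solutions; this is handled by a standard truncation/approximation argument, using the $C(J,H^1)$ regularity from Theorem \ref{theorem lwp} together with $\|\cdot\|_{\dot H^1_c}\sim \|\cdot\|_{\dot H^1}$ from \eqref{equivalent gradient norm}.
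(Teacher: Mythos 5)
The paper does not actually prove this lemma: it only states it and refers to \cite[Lemma 5.6]{Dinh}, whose proof is based on the localized virial argument of Ogawa--Tsutsumi \cite{OT}, and your plan is precisely that argument --- the localized virial identity with the extra term $-4c\int \varphi_R'(r)r^{-3}|u|^2\,dx$ from the inverse-square potential, extraction of $16E(u_0)$ using $\tfrac{2\alpha}{\alpha+2}\cdot 2d=\tfrac{16}{\alpha+2}$ at $\alpha=4/d$, crude $O(R^{-2})$ bounds on the $\Delta^2\varphi_R$ and potential remainders, and a radial Sobolev plus Young step for the nonlinear remainder --- with the correct signs and the correct error exponents. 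The only part you assert rather than carry out is the weighted Strauss/Young bookkeeping that produces the weight $\chi_{2,R}^{d/2}$ and the factor $\eps^{-2/(d-2)}$, which is exactly the computation done in the cited reference, so your proposal matches the intended proof in both route and outcome.
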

	We refer the reader to \cite[Lemma 5.6]{Dinh} for the proof of this result, which is based on the argument of \cite{OT}. 
	\begin{lemma}[blow-up criteria \cite{Dinh}] \label{lemma blow-up criteria}
		Let $d\geq 3$ and $c\ne 0$ be such that $c<\lambda(d)$. If $u_0 \in H^1$ is radial and satisfies
		\[
		E(u_0)<0,
		\]
		then the corresponding solution $u \in C([0,T), H^1)$ to the $L^2$-critical $(\ref{focusing NLS inverse square})$ blows up in finite time, i.e. $T<+\infty$. 
	\end{lemma}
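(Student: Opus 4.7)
The plan is to combine the localized virial estimate of Lemma \ref{lemma radial virial estimate} with a Glassey--Ogawa--Tsutsumi convexity argument. Since $u_0\in H^1$ carries no weighted-$L^2$ hypothesis, the exact virial identity of Lemma \ref{lemma virial identity} is unavailable and the spatial cutoff $\varphi_R$ built in $(\ref{define varphi_R})$ is what makes the argument go through. By the properties $(\ref{choice of theta})$ of $\theta$, both weights $\chi_{1,R}$ and $\chi_{2,R}$ from $(\ref{define chi_1 chi_2})$ are non-negative and uniformly bounded (independently of $R$), with $\chi_{1,R}\equiv 2$ and $\chi_{2,R}\equiv 2d$ on $\{|x|\ge 2R\}$. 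I would first fix $\eps>0$ sufficiently small (depending only on $d$ and $\theta$) so that
\[
\chi_{1,R}(x) - \frac{\eps}{d+2}\chi_{2,R}(x)^{d/2} \geq 0 \qquad \text{for all } |x|>R,
\]
which renders the gradient integral in $(\ref{radial virial estimate})$ non-positive and allows it to be discarded.

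With $\eps$ frozen I would then take $R>1$ so large that the residual error $O(R^{-2}+\eps R^{-2}+\eps^{-2/(d-2)}R^{-2})$ is bounded above by $-8 E(u_0)>0$, producing the clean inequality
\[
\frac{d^2}{dt^2} V_{\varphi_R}(u(t)) \leq 8\, E(u_0) < 0 \qquad \text{for all } t\in[0,T).
\]
Since $|\nabla\varphi_R(x)| \leq CR$, the initial time-derivative obeys $\bigl|\tfrac{d}{dt}V_{\varphi_R}(u(0))\bigr| = 2\bigl|\mathrm{Im}\!\int \bar u_0\,\nabla\varphi_R\cdot\nabla u_0\,dx\bigr| \leq C R\,\|u_0\|_{H^1}^2$. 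Integrating twice in time gives
\[
0 \leq V_{\varphi_R}(u(t)) \leq V_{\varphi_R}(u_0) + C R\|u_0\|_{H^1}^2\, t + 4\, E(u_0)\, t^2,
\]
a downward-opening parabola with strictly negative leading coefficient. It therefore becomes negative at some finite $t^\ast>0$, contradicting the manifest non-negativity of $V_{\varphi_R}$. Hence $T<t^\ast<\infty$, and the blow-up alternative of Theorem \ref{theorem lwp} then forces $\|\nabla u(t)\|_{L^2}\to \infty$ as $t\uparrow T$.

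The main obstacle is the ordering of the parameter choices: because the error term in $(\ref{radial virial estimate})$ contains the factor $\eps^{-2/(d-2)}R^{-2}$, the parameter $\eps$ must be frozen \emph{first} (to ensure pointwise non-negativity of the gradient weight and to fix the size of the constant $C_\eps$), and only afterwards may $R$ be sent to infinity to dominate the fixed quantity $-E(u_0)>0$. The one delicate pointwise verification is the non-negativity of $\chi_{1,R}-\frac{\eps}{d+2}\chi_{2,R}^{d/2}$ on the transition annulus $R<|x|<2R$, where the precise normalization $\theta''\le 2$ from $(\ref{choice of theta})$ is needed.
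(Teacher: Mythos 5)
Your overall strategy is exactly the paper's: apply the localized radial virial estimate of Lemma \ref{lemma radial virial estimate}, arrange for the gradient term on the annulus to be non-positive so it can be discarded, fix $\eps$ first and only then send $R$ large to absorb the $O(R^{-2}+\eps R^{-2}+\eps^{-2/(d-2)}R^{-2})$ error against $-8E(u_0)>0$, and conclude with the Glassey convexity argument. The ordering of the parameter choices and the final double integration are handled correctly.

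There is, however, a genuine gap in the step you flag as ``delicate''. For a weight $\theta$ satisfying only the generic conditions $(\ref{choice of theta})$, the pointwise inequality $\chi_{1,R}-\frac{\eps}{d+2}\chi_{2,R}^{d/2}\geq 0$ on $R<|x|<2R$ can fail for \emph{every} $\eps>0$, and the normalization $\theta''\le 2$ does not rescue it. Writing $s=r/R$, one has $\chi_{1,R}=2-\theta''(s)$ and $\chi_{2,R}=\bigl(2-\theta''(s)\bigr)+(d-1)\bigl(2-\theta'(s)/s\bigr)$. If $\theta''$ dips strictly below $2$ somewhere in $(1,2)$ and later returns to the value $2$ at some $s_0\in(1,2)$ --- which is perfectly compatible with $(\ref{choice of theta})$ --- then at $s_0$ one has $\chi_{1,R}=0$ while $\theta'(s_0)=2+\int_1^{s_0}\theta''<2s_0$, so $\chi_{2,R}>0$ there; no positive $\eps$ makes the difference non-negative. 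This is precisely why the paper does not argue with an arbitrary $\theta$ but constructs a specific one, $\theta(r)=\int_0^r\vartheta(s)\,ds$ with $\vartheta(s)=2[s-(s-1)^3]$ on $1<s\le 1+1/\sqrt{3}$ and $\vartheta'<0$ up to $s=2$. With that choice, near $s=1$ one gets $\chi_{1,R}=6(s-1)^2$ and $\chi_{2,R}=6(s-1)^2+2(d-1)(s-1)^3/s$, hence $\chi_{2,R}^{d/2}=O\bigl((s-1)^d\bigr)$ is dominated by a small multiple of $\chi_{1,R}$ because $d\ge 3>2$, while on the remainder of the annulus $\chi_{1,R}>2$ and $\chi_{2,R}$ stays bounded. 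Your proof becomes complete once you replace ``fix $\eps$ depending only on $d$ and $\theta$'' by this explicit construction (or by any $\theta$ for which $\chi_{2,R}^{d/2}\lesssim\chi_{1,R}$ on the transition annulus); as written, the key pointwise verification is asserted rather than available.
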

	\begin{proof}
		The proof of this result is given in \cite[Theorem 1.3]{Dinh}. For reader's convenience, we recall some details. Applying Lemma $\ref{lemma radial virial estimate}$, we have
		\[
		\frac{d^2}{dt^2} V_{\varphi_R} (u(t)) \leq 16 E(u_0) - 4 \int_{|x|>R} \left( \chi_{1,R} - \frac{\eps}{d+2} \chi_{2,R}^{\frac{d}{2}} \right) |\nabla u(t)|^2 dx + O \left( R^{-2} + 
		\eps R^{-2} + \eps^{-\frac{2}{d-2}} R^{-2} \right).
		\]
		If we choose a suitable function $\varphi_R$ so that 
		\begin{align}
		\chi_{1,R} - \frac{\eps}{d+2} \chi_{2,R}^{\frac{d}{2}} \geq 0, \quad \forall r >R, \label{non-negative condition}
		\end{align}
		for a sufficiently small $\eps>0$, then by choosing $R>1$ sufficiently large depending on $\eps$, we obtain
		\[
		\frac{d^2}{dt^2} V_{\varphi_R}(u(t)) \leq 8 E(u_0) <0,
		\]
		for any $t\in [0,T)$. The standard convexity argument then implies $T<+\infty$ or the solution blows up in finite time. Let us now choose $\varphi_R$ so that $(\ref{non-negative condition})$ is satisfied. To do this, we introduce the smooth function $\vartheta:[0,\infty) \rightarrow [0,\infty)$ satisfying
		\[
		\vartheta(r):= \left\{
		\begin{array}{cl}
		2r &\text{if } 0 \leq r\leq 1, \\
		2[r-(r-1)^3] &\text{if } 1<r \leq 1+1/\sqrt{3}, \\
		\vartheta' <0 &\text{if } 1+1/\sqrt{3} <r <2, \\
		0 &\text{if } r\geq 2,
		\end{array}
		\right.
		\]
		and define
		\[
		\theta(r):= \int_0^r \vartheta(s) ds. 
		\]
		It is not hard to check that $\theta$ satisfies $(\ref{choice of theta})$ and the function $\varphi_R$ defined as in $(\ref{define varphi_R})$ satisfies $(\ref{non-negative condition})$. We refer the reader to \cite{Dinh} for more details.
	\end{proof}
	We also need the so called Brezis-Lieb's lemma (see \cite{BL}). 
	\begin{lemma} [Brezis-Lieb's lemma \cite{BL}] \label{lemma brezis lieb lemma}
		Let $0<p<\infty$. Suppose that $f_n \rightarrow f$ almost everywhere and $(f_n)_{n\geq 1}$ is a bounded sequence in $L^p$, then
		\[
		\lim_{n \rightarrow \infty} \left( \|f_n\|^p_{L^p} - \|f_n-f\|^p_{L^p} \right) = \|f\|^p_{L^p}.
		\]
	\end{lemma}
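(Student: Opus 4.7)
The plan is to reduce the statement to a pointwise inequality plus the dominated convergence theorem. Set $g_n := f_n - f$, so $g_n \to 0$ almost everywhere, $f_n = g_n + f$, and by Fatou's lemma applied to $|f_n|^p$ we get $f \in L^p$. Then $(g_n)_{n\geq 1}$ is also bounded in $L^p$, say $\|g_n\|_{L^p}^p \leq K$ for all $n$. The target quantity can be written pointwise as
\[
\bigl| |f_n|^p - |g_n|^p - |f|^p \bigr| = \bigl| |g_n + f|^p - |g_n|^p - |f|^p \bigr|,
\]
and we want to show the integral of this expression tends to $0$ as $n \to \infty$.

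The main analytic input is the following elementary inequality: for every $\varepsilon > 0$ there exists $C_\varepsilon > 0$ such that for all $a, b \in \mathbb{C}$,
\[
\bigl| |a+b|^p - |a|^p - |b|^p \bigr| \leq \varepsilon |a|^p + C_\varepsilon |b|^p.
\]
For $p \geq 1$ this is obtained from the mean value theorem applied to $t \mapsto |t|^p$ combined with Young's inequality; for $0 < p < 1$ one uses subadditivity $|a+b|^p \leq |a|^p + |b|^p$ and a rescaling argument. Applying this with $a = g_n$ and $b = f$, we define
\[
W_{n,\varepsilon}(x) := \Bigl( \bigl| |g_n + f|^p - |g_n|^p - |f|^p \bigr| - \varepsilon |g_n|^p \Bigr)^+,
\]
which satisfies the pointwise bound $0 \leq W_{n,\varepsilon} \leq C_\varepsilon |f|^p \in L^1$ uniformly in $n$, and tends to $0$ almost everywhere as $n \to \infty$ (because $g_n \to 0$ a.e., so $|g_n + f|^p \to |f|^p$ and $|g_n|^p \to 0$ pointwise a.e.).

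By Lebesgue's dominated convergence theorem, $\int W_{n,\varepsilon}\, dx \to 0$ as $n \to \infty$. Combining with the definition of $W_{n,\varepsilon}$,
\[
\int \bigl| |f_n|^p - |g_n|^p - |f|^p \bigr|\, dx \leq \int W_{n,\varepsilon}\, dx + \varepsilon \int |g_n|^p\, dx \leq \int W_{n,\varepsilon}\, dx + \varepsilon K,
\]
so letting first $n \to \infty$ and then $\varepsilon \to 0$ yields $\|f_n\|_{L^p}^p - \|g_n\|_{L^p}^p - \|f\|_{L^p}^p \to 0$, which is exactly the claim.

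The main obstacle, as is typical for this lemma, is verifying the elementary inequality in the full range $0 < p < \infty$ (the case $0 < p < 1$ requires a slightly different argument from the convex case $p \geq 1$) and justifying the uniform $L^1$ domination needed to apply dominated convergence; once these ingredients are in place, the rest is bookkeeping.
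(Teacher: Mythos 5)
Your proof is correct: it is precisely the classical Brezis--Lieb argument --- the $\varepsilon$-refined elementary inequality $\bigl| |a+b|^p - |a|^p - |b|^p \bigr| \leq \varepsilon |a|^p + C_\varepsilon |b|^p$, the truncated functions $W_{n,\varepsilon}$ dominated uniformly by $C_\varepsilon |f|^p \in L^1$, and dominated convergence --- which is exactly the proof in the reference \cite{BL}; the paper itself states the lemma without proof and simply cites that source. The only delicate points (the case split $0<p<1$ versus $p\geq 1$ in the elementary inequality, Fatou to get $f \in L^p$, and the uniform $L^1$ domination) are all handled correctly, so nothing is missing.
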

	We are now able to prove the instability result given in Theorem $\ref{theorem instability II}$.

	\noindent {\it Proof of Theorem $\ref{theorem instability II}$.}
	Let $(\mu_n)_{n\geq 1}$ and $(\lambda_n)_{n\geq 1}$ be sequence of positive real numbers satisfying $\mu_n>1$,  $\lim_{n\rightarrow \infty} \mu_n =1$ and $\lim_{n\rightarrow \infty} \lambda_n =1$. Let $\omega>0$ and $Q$ be a radial $H^1$-solution to the elliptic equation $(\ref{mass-critical elliptic equation})$. Denote
	\begin{align}
	u_{0,n}(x) := \mu_n \lambda_n^{\frac{d}{2}} Q(\lambda_n x). \label{define initial data}
	\end{align}
	It is easy to see that
	\[
	\|u_{0,n}\|_{L^2}  = \mu_n \|Q\|_{L^2}, \quad \|\nabla u_{0,n}\|_{L^2}= \mu_n \lambda_n \|\nabla Q\|_{L^2}, \quad \|u_{0,n}\|_{\dot{H}^1_c} = \mu_n \lambda_n \|Q\|_{\dot{H}^1_c},
	\]
	for all $n\geq 1$. Moreover,
	\begin{align*}
	\lim_{n\rightarrow \infty} \|u_{0,n}\|_{L^2} = \lim_{n\rightarrow \infty} \mu_n \|Q\|_{L^2} = \|Q\|_{L^2}.
	\end{align*}
	By Lemma $\ref{lemma brezis lieb lemma}$, we see that
	\[
	u_{0,n} \rightarrow Q \text{ strongly in } L^2.
	\]
	Similarly,
	\[
	\lim_{n\rightarrow \infty} \|\nabla u_{0,n}\|_{L^2} = \lim_{n\rightarrow \infty} \mu_n \lambda_n \|\nabla Q\|_{L^2} = \|\nabla Q\|_{L^2}.
	\]
	Lemma $\ref{lemma brezis lieb lemma}$ again implies 
	\[
	u_{0,n} \rightarrow Q \text{ strongly in } \dot{H}^1,
	\]
	as $n\rightarrow \infty$. Therefore, $u_{0,n} \rightarrow Q$ strongly in $H^1$ as $n\rightarrow \infty$. It remains to show that $u_n$ blows up in finite time for $n$ sufficiently large. By Lemma $\ref{lemma blow-up criteria}$, it suffices to show that 
	\begin{align}
	E(u_{0,n}) <0, \label{negative energy}
	\end{align}
	for all $n \geq 1$. To see $(\ref{negative energy})$, we use $(\ref{pohozaev identity omega})$ to have
	\begin{align*}
	E(u_{0,n}) &= \frac{1}{2} \|u_{0,n}\|^2_{\dot{H}^1_c} - \frac{d}{2d+4} \|u_{0,n}\|^{\frac{4}{d}+2}_{L^{\frac{4}{d}+2}} \\
	&= \frac{1}{2} \mu_n^2 \lambda^2_n \|Q\|^2_{\dot{H}^1_c} - \frac{d}{2d+4} \mu_n^{\frac{4}{d}+2} \lambda_n^2 \|Q\|^{\frac{4}{d}+2}_{L^{\frac{4}{d}+2}} \\
	&= \frac{1}{2} \left(1-\mu_n^{\frac{4}{d}}\right) \mu_n^2 \lambda_n^2 \|Q\|^2_{\dot{H}^1_c}.
	\end{align*}    
	By the choice of $\mu_n$ and $\lambda_n$, we conclude that $(\ref{negative energy})$ holds for any $n\geq 1$. The proof is complete.
	\defendproof
	
	\appendix
	\section*{Appendix}
	In this short appendix, we will justify $(\ref{elliptic equation})$ and the radial symmetry of the limit in the compact embedding $(\ref{compact embedding})$.
	
	Let us first justify $(\ref{elliptic equation})$. Let $v \in S_M$, that is, $v \in H^1$, $\|v\|^2_{L^2} = M$ and $E(v)=d_M$. For $s \in \R$ and $\varphi$ a test function, we set
	\[
	v_s:= v + s \varphi, \quad \lambda_s:= \frac{\|v\|_{L^2}}{\|v_s\|_{L^2}} \quad \text{and} \quad w_s:= \lambda_s v_s. 
	\]
	We see that $\|w_s\|^2_{L^2} = \|v\|^2_{L^2} = M$, hence $E(w_s) \geq d_M$ for all $s \in \R$. We will prove that $F: s \mapsto E(w_s)$ is differentiable at $s=0$. Since the minimum of this function is attained at $s=0$, we get $F'(0) =0$. On the other hand, we have from a direct computation that
	\begin{align*}
	E(w_s) = \frac{\lambda_s^2 }{2} \|\nabla v_s\|^2_{L^2} -\frac{c \lambda_s^2 }{2} \||x|^{-1} v_s\|^2_{L^2} -\frac{\lambda_s^{\alpha+2}}{\alpha+2} \|v_s\|^{\alpha+2}_{L^{\alpha+2}},
	\end{align*}
	and
	\begin{align*}
	\frac{d}{ds}\Big|_{s=0} \|\nabla v_s\|^2_{L^2} &= 2 \re{ \scal{-\Delta v, \varphi} }, \quad \frac{d}{ds}\Big|_{s=0} \||x|^{-1} v_s\|^2_{L^2} = 2 \re{\scal{|x|^{-2} v, \varphi}}, \\
	\frac{d}{ds}\Big|_{s=0} \|v_s\|^{\alpha+2}_{L^{\alpha+2}} &= (\alpha+2) \re{\scal{|v|^\alpha v, \varphi}},
	\end{align*}
	as well as
	\[
	\frac{d}{ds}\Big|_{s=0} \lambda_s^2 = -\frac{2}{\|v\|^2_{L^2}} \re{\scal{v,\varphi}}, \quad \frac{d}{ds}\Big|_{s=0}  \lambda_s^{\alpha+2} = -\frac{\alpha+2}{\|v\|^2_{L^2}} \scal{v, \varphi}.
	\]
	Inserting the above identities to $F'(0)= \left.\frac{d}{ds} \right|_{s=0} E(w_s)=0$, we obtain
	\[
	\re{\scal{-\Delta v - c|x|^{-2}v - \frac{E(v)}{\|v\|^2_{L^2}} v - |v|^\alpha v, \varphi}}=0.
	\]
	Testing the above equality with $i\varphi$ instead of $\varphi$ and using the fact $\re{(iz)}= -\im{(z)}$, we get
	\[
	\im{\scal{-\Delta v - c|x|^{-2}v - \frac{E(v)}{\|v\|^2_{L^2}} v - |v|^\alpha v, \varphi}}=0.
	\]
	Therefore, $v$ solves
	\[
	-\Delta v - c|x|^{-2}v - \frac{d_M}{M} v - |v|^\alpha v =0.
	\]
	This solves $(\ref{elliptic equation})$ with $\omega = -\frac{d_M}{M}>0$.
	
	We next justify the radial symmetry of the limit in the compact embedding $(\ref{compact embedding})$. It is well-known that if $(u_n)_{n\geq 1}$ is a bounded sequence of $H^1_{\text{rad}}$ functions, then there exist a subsequence still denoted by $(u_n)_{n\geq 1}$ and $u \in H^1$ such that
	\[
	u_n \rightharpoonup u \text{ weakly in } H^1 \text{ and } u_n \rightarrow u \text{ strongly in } L^p, 
	\]
	for any $2<p<\frac{2d}{d-2}$. We will show that $u$ is radial. Indeed, since $u_n$ is radial, there exists $f_n: \R^+ \rightarrow \C$ such that $u_n(x) = f_n(|x|)$. The sequence $(u_n)_{n\geq 1}$ is strongly convergent in $L^p$, so it is a Cauchy sequence in $L^p$. This implies that $(f_n)_{n\geq 1}$ is a Cauchy sequence in $L^p(\R^+, r^{d-1} dr)$, hence strongly convergent since the latter space is complete. That is, there exists $f \in L^p(\R^+, r^{d-1} dr)$ such that
	\[
	f_n \rightarrow f \text{ strongly in } L^p(\R^+, r^{d-1} dr).
	\]
	We infer that there exists a subsequence $(f_{n_k})_{k\geq 1}$ of $(f_n)_{n\geq 1}$ and $I \subset \R^+$ with $\int_{I} r^{d-1} dr =0$ such that for all $r \in \R^+ \backslash I$,
	\[
	f_{n_k}(r) \rightarrow f(r),
	\]
	as $k \rightarrow \infty$. On the other hand, since $u_{n_k} \rightarrow u$ strongly in $L^p$, there exists a subsequence $(u_{n_{k_l}})_{l\geq 1}$ of $(u_{n_k})_{k\geq 1}$ and $\Omega \subset \R^d$ with $\int_{\Omega} dx =0$ such that for all $x \in \R^d \backslash \Omega$,
	\[
	u_{n_{k_l}} (x) \rightarrow f(|x|),
	\]
	as $l \rightarrow \infty$. Set $X:= \Omega \cup \{x \in \R^d \ : \ |x| \in I\}$. One easily checks that $X$ has Lebesgue measure zero. Let $x \in \R^d \backslash X$. On one hand, $u_{n_{k_l}}(x) \rightarrow u(x)$ as $l \rightarrow \infty$ (since $x \notin \Omega$), and on the other hand, $u_{n_{k_l}}(x)= f_{n_{k_l}}(|x|) \rightarrow f(|x|)$ as $l \rightarrow \infty$ (since $|x| \notin I$). One thus get $u(x) = f(|x|)$ or $u$ is radial.
	
    \section*{Acknowledgments}
    This work was supported by the National Natural Science Foundation of China grant No. 11501395. The first author would like to thank his thesis advisor, Pr. Sahbi Keraani, for his suggestions and encouragement. The second author would like to express his deep gratitude to his wife-Uyen Cong for her encouragement and support. The authors would like to thank the reviewers for their helpful comments and suggestions.

\end{document}